\newcommand{\bburl}[1]{\textcolor{blue}{\url{#1}}}
\numberwithin{equation}{section}
\newtheorem{thm}{Theorem}[section]
\newtheorem{cor}[thm]{Corollary}
\newtheorem{lem}[thm]{Lemma}
\newtheorem{defi}[thm]{Definition}
\theoremstyle{plain}
\newtheorem{rem}[thm]{Remark}
\newcommand\be{\begin{equation}}
\newcommand\ee{\end{equation}}
\newcommand\bee{\begin{equation*}}
\newcommand\eee{\end{equation*}}
\newcommand\bea{\begin{eqnarray}}
\newcommand\eea{\end{eqnarray}}
\newcommand\beae{\begin{eqnarray*}}
\newcommand\eeae{\end{eqnarray*}}
\newcommand\bi{\begin{itemize}}
\newcommand\ei{\end{itemize}}
\newcommand\ben{\begin{enumerate}}
\newcommand\een{\end{enumerate}}
\newcommand\bc{\begin{center}}
\newcommand\ec{\end{center}}
\newcommand\ba{\begin{array}}
\newcommand\ea{\end{array}}
\newcommand\frakfamily{\usefont{U}{yfrak}{m}{n}}
\DeclareTextFontCommand{\textfrak}{\frakfamily}
\newcommand{\foh}{\frac{1}{2}}  %onehalf
\newcommand{\hphi}{\widehat{\phi}}
\renewcommand{\d}{{\mathrm{d}}}
\newcommand{\supp}{\operatorname{supp}}
\newcommand{\<}{\left\langle}
\renewcommand{\>}{\right\rangle}
\title
{
	Bounding the Order of Vanishing of Cuspidal Newforms via the $n$\textsuperscript{th} Centered Moments
}
\author[Dutta]{Sohom Dutta}
\email{\textcolor{blue}{\href{mailto:sdutta611@student.fuhsd.org}{sdutta611@student.fuhsd.org}}}
\address{Lynbrook High School, San Jose, CA 95129}
\author[Miller]{Steven J. Miller}
\email{\textcolor{blue}{\href{mailto:sjm1@williams.edu}{sjm1@williams.edu}},  \textcolor{blue}{\href{Steven.Miller.MC.96@aya.yale.edu}{Steven.Miller.MC.96@aya.yale.edu}}}
\address{Department of Mathematics and Statistics, Williams College, Williamstown, MA 01267}
\date{\today}
\subjclass[2010]{11Mxx (primary); 45Bxx (secondary)}
\keywords{Cuspidal Newforms, $L$-functions, order of vanishing, Random Matrix Theory}
\begin{document}

%\begin{titlepage}

\maketitle

\thispagestyle{empty}

\begin{abstract}
Building on the work of Iwaniec, Luo and Sarnak, we use the $n$-level density to bound the probability of vanishing to order at least $r$ at the central point for families of cuspidal newforms of prime level $N \to \infty$, split by sign. There are three methods to improve bounds on the order of vanishing: optimizing the test functions, increasing the support, and increasing the $n$-level density studied. Previous work has determined the optimal test functions for the $1$ and $2$-level densities in certain support ranges, with the effectiveness of the bounds only marginally increasing by the optimized test functions over simpler ones, and thus this is not expected to be a productive avenue for further research. Similarly the support has been increased as far as possible, and further progress is shown to be related to delicate and difficult conjectures in number theory. Thus we concentrate on the third method, and study the higher centered moments (which are similar to the $n$-level densities but combinatorially easier). We find the level at each rank for which the bounds on the order of vanishing is the best, thus producing world-record bounds on the order of vanishing to rank at least $r$ for every $r > 2$ (for example, our bounds for vanishing to order at least 5 or at least 6 are less than half the previous bounds, a significant improvement). Additionally, we explicitly calculate the optimal test function for the $1$-level density from previous work and compare it to the naive test functions for higher levels. In doing so, we find that the optimal test function for certain levels are not the optimal for other levels, and some test functions may outperform others for some levels but not in others. Finally, we explicitly calculate the integrals needed to determine the bounds, doing so by transforming an $n$-dimensional integral to a $1$-dimensional integral and greatly reducing the computation cost in the process. 
\end{abstract}

\tableofcontents

%\end{titlepage}
%\end{titlepage}
\thispagestyle{empty}

%%%%%%%%%%%%%%%%%%%%%%%%%%%%%%%%%%%%%%%%%%%%%%%%%%%%%%%%%%%%%%%%%%%%%%%%%%%%%%%%%%%%%%%%%%%%%%%%%%%%%%%%%%%%%%%%%%%%%%%%%%%%%%%%%%%%%%%%%%%%%%%%%%%%%%%%%%%%%%%%
%%%%%%%%%%%%%%%%%%%%%%%%%%%%%%%%%%%%%%%%%%%%%%%%%%%%%%%%%%%%%%%%%%%%%%%%%%%%%%%%%%%%%%%%%%%%%%%%%%%%%%%%%%%%%%%%%%%%%%%%%%%%%%%%%%%%%%%%%%%%%%%%%%%%%%%%%%%%%%%%
%%%%%%%%%%%%%%%%%%%%%%%%%%%%%%%%%%%%%%%%%%%%%%%%%%%%%%%%%%%%%%%%%%%%%%%%%%%%%%%%%%%%%%%%%%%%%%%%%%%%%%%%%%%%%%%%%%%%%%%%%%%%%%%%%%%%%%%%%%%%%%%%%%%%%%%%%%%%%%%%
\section{Introduction}

\subsection{Background}

Building on the work of Iwaniec, Luo and Sarnak \cite{ILS} we determine new world records for bounding the probability of $L$-functions of cuspidal newforms of prime level $N\to \infty$, split by sign, vanishing to order at least $r$ at the central point. We recall some standard definitions (see \cite{IK} for more details), review connections between $L$-functions and Random Matrix Theory, and then state our improved results.

\begin{defi}[Cuspidal Newforms]
Let $H^\star_k(N)$ denote the set of holomorphic cusp forms of weight $k$ that are newforms of level $N$. For every $f\in H^\star_k(N)$, we have a Fourier expansion
\begin{equation}
f(z)\ = \ \sum_{n=1}^\infty a_f(n) e(nz).
\end{equation}
We set $\lambda_f(n) =  a_f(n) n^{-(k-1)/2}$, and obtain the $L$-function associated to $f$ 
\begin{equation}
L(s,f)\ =\ \sum_{n=1}^\infty \lambda_f(n) n^{-s}.\label{eq: L-function}
\end{equation}
The completed $L$-function is
\begin{equation}\label{eq:completed_L_func}
\Lambda(s,f) \ :=\ \left(\frac{\sqrt{N}}{2\pi}\right)^s
\Gamma\left(s+\frac{k-1}{2}\right) L(s,f).\end{equation}
Since $\Lambda(s,f)$ satisfies the functional equation $\Lambda(s,f) \ = \
\epsilon_f \Lambda(1-s,f)$ with $\epsilon_f = \pm 1$, $H^\star_k(N)$ splits into two disjoint subsets, $H^+_k(N) = \{
f\in H^\star_k(N): \epsilon_f = +1\}$ and $H^-_k(N) = \{ f\in
H^\star_k(N): \epsilon_f = -1\}$.
The associated symmetry group of $H^\star_k(N)$ is Orthogonal \emph{(O)}, $H^+_k(N)$ is Special Orthogonal even \emph{SO(even)}, and $H^-_k(N)$ is Special Orthogonal odd \emph{SO(odd)}.
\end{defi}

%%%%%%%%%%%%%%%%%%%%%%%%%%%%%%%%%%%%%%%%%%%%%%%%%%%%%%%%%%%%%%%%%%%%%%%%%%%%%%%%%%%%%%%%%%%%%%%%%%5
%%%%%%%%%%%%%%%%%%%%%%%%%%%%%%%%%%%%%%%%%%%%%%%%%%%%%%%%%%%%%%%%%%%%%%%%%%%%%%%%%%%%%%%%%%%%%%%%%%5
\subsection{Random Matrix Theory and Statistics of $L$-functions}
The Riemann Hypothesis states that all zeroes of the Riemann zeta function are either at the negative even integers (the trivial zeros, as these numbers are well understood!) or complex numbers with real part $1/2$ (the nontrivial zeros); the Generalized Riemann Hypothesis (GRH) asserts this is true for all $L$-functions, in particular for the cuspidal newforms we study.

It turns out that the behavior of many different objects in mathematics and physics are the same, and this has led to fruitful conversations where one subject suggests problems and predicts answers in another. In the early 1900s, random matrix theory was used for applications in statistics and harmonic analysis. However, a major advance in the subject was made with the seminal work of Eugene Wigner, who noticed a remarkable connection between fluctuations in the position of compound nuclei resonances and statistics for the eigenvalues of random matrices.  As more researchers began exploring random matrix theory, they discovered connections between random matrix theory (specifically the distribution of eigenvalues of matrices) and number theory (the distribution of the non-trivial zeros). See \cite{BFMT-B, Ha} and the references therein for the history and many of the results.

The first statistics studied were the $n$-level correlations and the spacings between adjacent zeros; see \cite{Hej, Mon, Od1, Od2, RudnickSarnak}, where it was observed that the behavior of zeros far from the central point converged to a universal behavior, independent of the arithmetic of the $L$-function. This led to the quest to find a new statistic that was sensitive to the behavior near the central point, which by the Birch and Swinnerton-Dyer Conjecture was known to be an important point to study. 

Katz and Sarnak \cite{KatzSarnak, KatzSarnak2} introduced a new statistic, the $n$-level density, which has different values for different families of $L$-functions, and essentially only depends on the zeros near the central point. They found that as the level of the forms approach infinity, the statistics for the zeroes of families of $L$-functions can be modelled by eigenvalue statistics for one of the classical compact groups (unitary, orthogonal, symplectic). 

\begin{defi}[$n$-level Density]\label{nleveldensity}
The $n$-level density of an $L$-function $L(s,f)$ is defined as
\begin{equation}
    D_n (f; \phi) \ := \  \sum_{\substack{j_1, \cdots, j_n \\ j_i \neq \pm j_k}} \phi \left( \frac{\log c_f}{2\pi} \gamma_f^{(j_1)}, \dots, \frac{\log c_f}{2\pi} \gamma_f^{(j_n)} \right)	\label{def:density}
\end{equation}
for a \emph{test function} $\phi: \mathbb{R}^n \to \mathbb{R}$ where $c_f$ is the analytic conductor of $f$. For many applications we assume $\phi$ is a non-negative even Schwartz function (see \ref{Schwartz}) with compactly supported\footnote{A function $f$ is supported in $(\sigma, \sigma)$ if $f(x) = 0$ for all $x$ with $|x| > \sigma$.} Fourier transform and $\phi(0, \ldots, 0) > 0$. 
\end{defi}

There is now an extensive series of papers showing that the $n$-level densities of various families of $L$-functions match the random matrix theory predictions; see for example the introduction in \cite{C--} for a review of the literature. Our main result is to use the $n$-level densities to bound how often forms in the family $\mathcal{F}_N$ vanish to a given order (or more), with $\mathcal{F}_N$ being cuspidal newforms of level $N$ and some fixed weight $k$, with $N \to \infty$ through the primes\footnote{This latter condition is for technical reasons; with additional work (see \cite{BBDDM}) one may take $N$ tending to infinity through all integers. We use the results on the $n$-level densities of cuspidal newform families as inputs, and thus with standard but tedious and technical work, we could remove the prime restriction.}

%%%%%%%%%%%%%%%%%%%%%%%%%%%%%%%%%%%%%%%%%%%%%%%%%%%%%%%%%%%%%%%%%5
%%%%%%%%%%%%%%%%%%%%%%%%%%%%%%%%%%%%%%%%%%%%%%%%%%%%%%%%%%%%%%%%%5
\subsection{Test Functions}
To get the best bounds, we must choose a good test function to use in the $n$-level density. As shown later (see Theorem \ref{thm:onelevelboundpr}), we require the test function to be even, non-negative, Schwartz, and have a Fourier transform\footnote{We define the Fourier transform of $g$ by $\widehat{g}(y) = \int_{-\infty}^\infty g(x) e^{-2\pi i x y}$.} with finite support; we will later see how to pass from such functions to bounds on vanishing. 

To obtain the most information about what is happening at the central point, we want our test function to be concentrated there and rapidly decay, ideally a delta spike at the origin. Unfortunately, the closer our test function is to a delta spike, the larger the support is of its Fourier transform; this is the mathematical instance of the Heisenberg Uncertainty Principle: we cannot localize both a function and its Fourier transform. The reason this is an obstruction is that the $n$-level density requires us to compute certain weighted sums of the $L$-function coefficients times the Fourier transform of the test function evaluated at the logarithm of the primes, and we can only compute these sums if the support of the Fourier transform is suitably restricted. Thus the goal is to find test functions as close to the delta spike as we can, subject to being able to compute the resulting sums on the Fourier transform side.

Throughout this paper, the main test function we use is the naive test function (so named as it is easy to use and easy to guess is worth using).

\begin{defi}[Naive Test Function]
The naive test function is the Fourier test function pair 
\begin{equation}
    \phi_{\rm naive}(x) \ = \ \left(\frac{\sin(\pi v_nx)}{(\pi v_nx)}\right)^2 \ \ \ , \ \ \ \widehat{\phi}_{\rm naive}(y) = \frac{1}{v_n}\left(y-\frac{|y|}{v_n}\right)
\end{equation}
for $|y| \ < \ v_n$ where $v_n$ is the support.
\end{defi}

Previous work has improved on the naive test function by finding the optimal test function for some of the $n$-level densities for certain ranges of support (see, for example, \cite{BCDMZ, FreemanMiller, ILS}). This is one of three ways to improve results; however, the optimal function for the 1-level density leads to such a small improvement (and there are similarly small gains in the higher levels) that this avenue is not pursued in this paper. The second approach is to try to increase the support for the $n$-level density, but doing so requires resolving difficult combinatorics and technical sums. There has been a recent breakthrough here, though, in work by Cohen et. al. \cite{C--}, who increased the support for the $n$-level density from $1/(n-1)$ to $2/n$. 

We turn to the third method, high level densities. There has been some progress here; Li and Miller \cite{LiM} used the 4-level density to obtain better bounds than those from the first and second level densities, for sufficiently large vanishing at the central point. Unfortunately the higher level densities have a disadvantage -- while they give better bounds for much larger than expected vanishing at the central point, they give worse bounds for small vanishing.

This trade-off has never been quantified and analyzed till now, and is the main goal of this project. In particular, we prove for each $r$ what is the optimal level density to use (for a fixed test function, usually the naive one), to bound the probability of vanishing to order at least $r$ at the central point.

%%%%%%%%%%%%%%%%%%%%%%%%%%%%%%%%%%%%%%%%%%%%%%%%%%%%%%%%%%%%%%%%%5
%%%%%%%%%%%%%%%%%%%%%%%%%%%%%%%%%%%%%%%%%%%%%%%%%%%%%%%%%%%%%%%%%5
\subsection{$n$-th Centered Moments}

The first results are due to Iwaniec-Luo-Sarnak \cite{ILS}, who computed the 1-level density for families of cuspidal newforms split by sign with support up to $2$. This was extended by Hughes-Miller \cite{HM} to the $n$-level; while it was expected their results should hold up to $2/n$ for the support, there were combinatorial obstructions and their results are only valid if the support is at most $1/(n-1)$; these complications were recently resolved in \cite{C--}, and now the $n$-level is known up to $2/n$. 

The following theorem from \cite{C--} is used to generate bounds on the order of vanishing. 

\begin{comment}
    Though the larger support supplants the utility of the smaller support from \cite{HM}, we include bounds from both methods as the calculations are significantly easier using the restricted support, and doing so allows us to see just how valuable increasing the support is.
\end{comment}

\begin{comment}
\begin{thm}{Theorem 1.6 from \cite{HM}}\label{thm:extended momentsintro}
Let $n \geq 2$, $\supp(\hphi) \subset (-\frac1{n-1},
\frac1{n-1})$, $D(f;\phi)$ be as in \eqref{nleveldensity}, and
define
\begin{align}\label{eq:defRnn-1second}
R_n(\phi) \ &:= \ (-1)^{n-1} 2^{n-1}\left[ \int_{-\infty}^\infty
\phi(x)^{n}
\frac{\sin 2\pi x}{2\pi x} \;\d x - \foh \phi(0)^{n} \right]\hfill\\
\sigma^2_\phi\label{eq:defRnn-1bsecond}  \ &:= \ 2\int_{-1}^{1} |y|
\hphi(y)^2\;\d y. \hfill
\end{align}
Assume GRH for $L(s,f)$ and for all Dirichlet $L$-functions. As
$N\to\infty$ through the primes,
\begin{equation}\label{eq:thmextmomcompactfirst}
\lim_{\substack{N\to\infty \\ N \text{\rm prime}}} \<
\left(D(f;\phi) - \< D(f;\phi) \>_\pm \right)^{n}\>_\pm \ = \
\begin{cases}
(2m-1)!!\ \sigma^{2m}_\phi \pm R_{2m}(\phi) &
\text{\rm if $n=2m$ is even}\\
\pm R_{2m+1}(\phi) & \text{\rm if $n=2m+1$ is odd.}
\end{cases}
\end{equation}
\end{thm}
\end{comment}

\begin{thm}\label{2nmainfirst}[Theorem 1.1 from \cite{C--}.]
Let $n \geq 2$ and ${\rm supp}(\phi) \subset (-\frac{2}{n}, \frac{2}{n})$. Define
\begin{eqnarray}
    \sigma_{\phi}^2 & \ := \ & 2\int_{-\infty}^{\infty} |y|\widehat{\phi}(y)^2 dy \nonumber\\
    R(m, i; \phi) & \ := \ & 2^{m-1}(-1)^{m+1}\sum_{l=0}^{i-1} (-1)^l \binom{m}{l} \nonumber\\ & & \left(-\frac{1}{2}\phi^m(0)  +\int_{-\infty}^{\infty} \cdots \int_{-\infty}^{\infty} \widehat{\phi}(x_2) \cdots \widehat{\phi}(x_{l+1}) \right. \nonumber\\ & &  \left. \int_{-\infty}^{\infty} \phi^{m-l}(x_1)\frac{\sin(2\pi x_1(1+|x_2|+\cdots+|x_{l+1}|))}{2\pi x_1}dx_1 \cdots dx_{l+1} \right) \nonumber\\
    S(n, a, \phi) & \ := \ & \sum_{l=0}^{\lfloor\frac{a-1}{2}\rfloor} \frac{n!}{(n-2l)!l!}R(n-2l,a-2l,\phi)\left(\frac{\sigma_{\phi}^2}{2}\right)^l 
\end{eqnarray} and
\begin{equation}\label{integralvalue2nfirst}
    \lim_{\substack{N\to\infty \\ N \text{\rm prime}}} \<
\left(D(f;\phi) - \< D(f;\phi) \>_\pm \right)^{n}\>_\pm  \ = \ (n-1)!! \sigma_{\phi}^n 1_{n \ \rm even} \pm S(n, a; \phi).
\end{equation}
\end{thm}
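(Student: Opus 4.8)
The plan is to deduce the formula from the limiting $n$-level densities of the cuspidal-newform families split by sign (the main input of \cite{C--}): after expanding the $n$-th power of the $1$-level density and regrouping, the $n$-th centered moment becomes an explicit finite combination of limiting $k$-level densities ($k\le n$) evaluated on product test functions, and these are exactly what \cite{C--} computes once the total Fourier-support is below $2$. So the proof splits into an analytic half — the $n$-level density asymptotics on the enlarged support, imported from \cite{C--} — and a combinatorial half, assembling the moment out of them; I describe both, with emphasis on the bookkeeping.

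First I would carry out the reduction. Write $D(f;\phi)=\sum_{j}\phi\!\left(\tfrac{\log c_f}{2\pi}\gamma_f^{(j)}\right)$; expanding $(D(f;\phi)-\langle D(f;\phi)\rangle_\pm)^n$ by the binomial theorem, every term is $\langle D(f;\phi)\rangle_\pm^{\,n-t}$ times $\sum_{j_1,\dots,j_t}\prod_{i=1}^t\phi(\cdots\gamma^{(j_i)})$. Partition each index tuple $(j_1,\dots,j_t)$ by the set partition of $\{1,\dots,t\}$ recording which indices coincide up to sign; using $\gamma^{(-j)}=-\gamma^{(j)}$ and that $\phi$ is even, a block of size $b$ collapses to one factor $2^{b-1}\phi^{b}(\cdots\gamma^{(j)})$ (the $2^{b-1}$ from the $\pm$ identifications), while across distinct blocks the indices are distinct up to sign, so one obtains a restricted $|P|$-level sum. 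Since $\langle D(f;\phi)\rangle_\pm\to\int\phi(x)W_{{\rm SO}(\pm)}(x)\,dx$ and, as $N\to\infty$ through primes, each restricted $k$-level sum converges to the limiting $k$-level density of the family on the product test function $\phi^{b_1}\otimes\cdots\otimes\phi^{b_{k}}$, this yields the claimed finite combination. The support hypothesis enters twice here: $\widehat{\phi^{b}}=\widehat\phi\ast\cdots\ast\widehat\phi$ is supported in $\big(-\tfrac{2b}{n},\tfrac{2b}{n}\big)$, so with $\sum_i b_i=n$ the total support of every product test function is below $2$ — the range in which \cite{C--} established the $n$-level (hence all $\le n$-level) densities — and the precise size of the support also bounds how many correction terms survive, which is the parameter $a$.

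Next I would insert the orthogonal kernels and collapse the resulting double sum (over set partitions from the moment expansion and over subsets/terms in the kernel). The $n$-level density kernel for the two subfamilies splits into a determinantal bulk piece built from the $1$-level Dirac-plus-$\tfrac{\sin 2\pi y}{2\pi y}$ kernel and a correction localized at the central point (the forced central zero when $\epsilon_f=-1$), which is what produces the sign difference $\pm$. The bulk part, together with the $\phi^2$-factors the sieving produces and the determinantal cancellation of the disconnected terms, contributes the variance $2\int_{-\infty}^{\infty}|y|\,\widehat\phi(y)^2\,dy=\sigma_\phi^2$ for each of the $(n-1)!!$ ways to pair the $n$ indices, giving the Gaussian term $(n-1)!!\,\sigma_\phi^{n}$ for $n$ even (and nothing for $n$ odd, since then some block has odd size and, once the mean is subtracted, such a block contributes nothing against the bulk kernel). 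The remaining configurations are those with exactly one block, of size $n-2l$, meeting the correction term while the other $2l$ indices pair off; rewriting the correction kernel's iterated $\delta$/${\rm sign}$ structure in Fourier variables produces precisely $\tfrac{\sin(2\pi x_1(1+|x_2|+\cdots+|x_{l+1}|))}{2\pi x_1}$ against $\widehat\phi(x_2)\cdots\widehat\phi(x_{l+1})$ together with the archimedean term $-\tfrac12\phi^{m}(0)$, while the alternating sum $\sum(-1)^l\binom{m}{l}$ inside $R(m,i;\phi)$ is the inclusion--exclusion that corrects for overlapping block structures; with $\tfrac{n!}{(n-2l)!\,l!}$ ways to choose the special block and pair off the rest, and the weight $(\sigma_\phi^2/2)^l$ from those pairs, this assembles to $\pm S(n,a;\phi)$.

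The main obstacle is the analytic half, which is exactly what \cite{C--} supplies: pushing the $n$-level density to total support below $2$ requires controlling the off-diagonal of the Petersson formula and the associated prime sums — invoking GRH for $L(s,f)$ and for Dirichlet $L$-functions — and resolving the combinatorics that had confined \cite{HM} to support $1/(n-1)$. On the combinatorial side the delicate points are (i) the support threshold, which must be tracked carefully so that only the correct finitely many cross terms survive and which is what pins down $a$; (ii) the parity and sign accounting of the $j\leftrightarrow-j$ identifications, so that the $2^{b-1}$ and the $(-1)^{m+1}$ come out right; and (iii) the inclusion--exclusion collapsing overlapping blocks, the source of the alternating binomial sum in $R(m,i;\phi)$. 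I would organize the write-up so that the Gaussian term and the $\pm S(n,a;\phi)$ correction are isolated by separate lemmas mirroring the bulk/edge split of the kernel, handling the parity of $n$ and the sign $\pm$ uniformly throughout.
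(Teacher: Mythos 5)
This statement is not proved in the paper at all: it is quoted verbatim as Theorem 1.1 of \cite{C--} and used as a black-box input, so the only ``proof'' the paper offers is the citation. Your proposal therefore has to stand on its own as a reconstruction of the argument in \cite{C--}, and there it has a genuine gap: it is essentially circular in its analytic half. You propose to expand the centered $n$-th moment into restricted $k$-level sums on product test functions $\phi^{b_1}\otimes\cdots\otimes\phi^{b_k}$ and then to evaluate each of these by ``the limiting $k$-level densities on total support below $2$, imported from \cite{C--}.'' But that is not what \cite{C--} supplies, and it is not available: the $n$-level densities of these families at total support approaching $2$ are precisely the statement that remains out of reach because of the combinatorial obstructions (this is why Hughes--Miller were confined to $1/(n-1)$, and why the present paper emphasizes that the centered moments are studied \emph{because} they are combinatorially easier than the $n$-level densities). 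The theorem you are asked to prove \emph{is} the main theorem of \cite{C--}; the actual proof runs in the opposite direction from your plan, computing the centered moments directly from the explicit formula and averaging over the family with the Petersson formula, where the real work is the arithmetic: controlling the off-diagonal Bessel--Kloosterman terms and the prime sums under GRH when the Fourier support of $\phi$ is pushed up to $2/n$, and then organizing the resulting main terms into the $\sigma_\phi^2$, $R(m,i;\phi)$ and $S(n,a;\phi)$ pieces. Your second paragraph, which assembles the answer from a determinantal bulk kernel plus a central-point correction, describes the random-matrix prediction side (the shape of the answer) rather than a derivation of the number-theoretic asymptotic, so the substance of the theorem is never actually established in your outline.

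The combinatorial bookkeeping you sketch (collapsing index tuples by set partitions, the $2^{b-1}$ from the $\pm j$ identifications, the $(n-1)!!$ pairings giving $(n-1)!!\,\sigma_\phi^n$ for even $n$, the inclusion--exclusion producing the alternating binomial sum, and the multiplicity $\tfrac{n!}{(n-2l)!\,l!}(\sigma_\phi^2/2)^l$) is a reasonable description of how the final expression is organized and does track the structure of the Hughes--Miller/\cite{C--} computations. But as written it cannot be completed into a proof without either (i) proving the $n$-level density asymptotics at total support below $2$, which is a strictly harder open problem, or (ii) redoing the direct moment computation of \cite{C--}, which your outline defers entirely to the very reference whose theorem is being proved. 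You should also be explicit about the role of the parameter $a$ (in \cite{C--} it is tied to the size of the support of $\widehat\phi$); in your sketch it appears only as ``how many correction terms survive,'' which is too vague to pin down the precise statement.
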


\footnote{With extra work, we can move to $N$ through the square-free integers in the limit.}

%%%%%%%%%%%%%%%%%%%%%%%%%%%%%%%%%%%%%%%%%%%%%%%%%%%%%%%%%%%%%%%%%5
%%%%%%%%%%%%%%%%%%%%%%%%%%%%%%%%%%%%%%%%%%%%%%%%%%%%%%%%%%%%%%%%%5
\subsection{Main Results}\label{sec:mainresults}

We use the $n$-level densities to bound how often forms in the family $\mathcal{F}_N$ vanish with $\mathcal{F}_N$ being cuspidal newforms of level $N$ and some fixed weight $k$, with $N \to \infty$ through the primes. 

In order to generate better bounds on the order of vanishing, as remarked there are three main methods we can turn to: optimizing the test function, increasing the support, and using higher levels. We concentrate on the last.

Because work done to find the optimal test function produces marginal improvements, we turn to using higher levels using the recently improved results with support $2/n$, thus producing record bounds for the order of vanishing as shown in the following tables. For example, the previous record for vanishing to order 5 or more was .06580440 and for order 6 it was .00853841 from \cite{LiM}, while we show for vanishing to order 5 or more it is at most .020408300 while 6 or more .003346510; these are not slight improvements but a decrease by more than a factor of two! The tables below show our results for the various symmetry groups; cuspidal newforms with even functional equations are SO(even), while those with odd signs are SO(odd).

\begin{center}
    \begin{tabular}{ |p{3cm}||p{3cm}|p{3cm}|p{3cm}|  }
 \hline
 \multicolumn{3}{|c|}{Lowest Bounds for Each Rank for G=SO(even)} \\
 \hline
 Rank & Level Used &Bound\\
 \hline
 2 & 1  & $0.43231300$\\
 4 & 2 & $0.066666667$\\
 6 & 6 & $0.003346510$\\
 8 & 8 & $0.000579210$\\
 10 & 10 & $1.14380\times10^{-6}$\\
 12 & 12 & $1.85901\times10^{-8}$\\
 14 & 14 & $2.59310\times10^{-10}$\\
 16 & 16 & $3.09185\times10^{-12}$\\
 18 & 18 & $3.26332\times10^{-14}$\\
 20 & 20 & $3.08920\times10^{-16}$\\
 \hline
\end{tabular}
\end{center}

\begin{center}
    \begin{tabular}{ |p{3cm}||p{3cm}|p{3cm}|p{3cm}|  }
 \hline
 \multicolumn{3}{|c|}{Lowest Bounds for Each Rank for G=SO(odd)} \\
 \hline
 Rank & Level Used &Bound\\
 \hline
 1 & N/A & $1.0000000$\\
 3 & 2 & $0.111111111$\\
 5 & 2 & $0.020408300$\\
 7 & 6 & $0.000292790$\\
 9 & 8 & $7.65596\times10^{-6}$\\
 11 & 10 & $1.53302\times10^{-7}$\\
 13 & 12 & $2.50956\times10^{-9}$\\
 15 & 16 & $3.03362\times10^{-11}$\\
 17 & 18 & $3.10549\times10^{-13}$\\
 19 & 20 & $4.18402\times10^{-17}$\\
 \hline
\end{tabular}
\end{center}

We find that although using higher levels can create better bounds, increasing the levels to an arbitrarily large value does not lead to better bounds for small rank. Increasing the level eventually produces trivial bounds (such as the percent that vanish to a given level is at most a number greater than 100\%!). Thus there are only a finite number of levels that need to be checked to find which level creates the optimal bound for each order of vanishing. Additionally, we find that the optimal test functions for certain levels will not necessarily outperform other test functions for higher levels, as shown by the worse results produced by the $1$-level optimal test function for higher levels as opposed to the naive test function; this was a very surprising result, namely that a function which is superior for one $n$ can become inferior to a test function it beat for larger $n$.

In addition to calculating bounds, we show explicit calculations for the integrals needed to generate bounds. For support $2/n$, we are able to reduce an $n$-dimensional integral to a $1$-dimensional integral, thus significantly reducing the computation time needed to evaluate the integral (it is unfortunately not solvable in closed form, and requires numerical approximation, which we do via Simpson's method).

After reviewing some number theory and complex analysis preliminaries we continue to a review of work done by \cite{ILS} to generate bounds for the $1$-level density. We show the bounds obtained by \cite{ILS} by both the naive and optimal test function while explicitly enumerating the optimal Fourier pair for the $1$-level density. We then continue to results for the bounds with higher levels for support $1/(n-1)$ and extend this to support $2/n$. Lastly, we display plots and tables of the bounds of the order of vanishing based on the rank and level.

%%%%%%%%%%%%%%%%%%%%%%%%%%%%%%%%%%%%%%%%%%%%%%%%%%%%%%%%%%%%%%%%%%%%%%%%%%%%%%%%%%%%%%%%%%%%%%%%%%%%%%%%%%%%%%%%%%%%%%%%%%%%%%%%%
%%%%%%%%%%%%%%%%%%%%%%%%%%%%%%%%%%%%%%%%%%%%%%%%%%%%%%%%%%%%%%%%%%%%%%%%%%%%%%%%%%%%%%%%%%%%%%%%%%%%%%%%%%%%%%%%%%%%%%%%%%%%%%%%%
%%%%%%%%%%%%%%%%%%%%%%%%%%%%%%%%%%%%%%%%%%%%%%%%%%%%%%%%%%%%%%%%%%%%%%%%%%%%%%%%%%%%%%%%%%%%%%%%%%%%%%%%%%%%%%%%%%%%%%%%%%%%%%%%%
\section{Preliminaries}

We record some needed definitions and standard results; see \cite{SS} for more details and proofs.

\begin{defi}[Fourier Transform]
The Fourier transform of a function $\phi(x)$ is
\begin{equation}
    F(\phi(x)) \ := \ \widehat{\phi}(y) \ = \ \int_{-\infty}^{\infty} \phi(x)e^{-2\pi ixy}dx.
\end{equation}
\end{defi}

\begin{thm}[Plancherel Theorem]
For a Fourier pair $f(x)$ and $\widehat{f}(y)$ such that $E(t)$ is square-integrable,
\begin{equation}
    \int_{-\infty}^{\infty} |f(x)|^2 dt \ = \ \int_{-\infty}^{\infty} |\widehat{f}(y)|^2 dy.
\end{equation}
\end{thm}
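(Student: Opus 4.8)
The plan is to first establish the identity for a dense, well-behaved class of functions --- the Schwartz functions $\mathcal{S}(\mathbb{R})$ --- where all integrals converge absolutely and Fourier inversion is available, and then to extend to all of $L^2(\mathbb{R})$ by a density argument. (We read the hypothesis as: $f \in L^1(\mathbb{R}) \cap L^2(\mathbb{R})$, which already covers the test functions used in this paper since they are Schwartz; the statement for general square-integrable $f$ follows from the extension described below.)

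First I would record the \emph{multiplication formula}: for $f, g \in \mathcal{S}(\mathbb{R})$, Fubini's theorem applied to the absolutely convergent double integral $\iint f(x) g(y) e^{-2\pi i xy}\, dx\, dy$ gives
\[
\int_{-\infty}^{\infty} \widehat{f}(y)\, g(y)\, dy \ = \ \int_{-\infty}^{\infty} f(x)\, \widehat{g}(x)\, dx .
\]
Next, given $f \in \mathcal{S}(\mathbb{R})$, set $g(y) := \overline{\widehat{f}(y)}$; since $\mathcal{S}(\mathbb{R})$ is closed under complex conjugation and under the Fourier transform, $g \in \mathcal{S}(\mathbb{R})$. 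A short computation using the Fourier inversion theorem --- which I would either cite from \cite{SS} or prove via the Gaussian $e^{-\pi t^2}$ (its own Fourier transform) together with an approximate-identity argument --- shows $\widehat{g}(x) = \overline{f(x)}$. Substituting into the multiplication formula yields
\[
\int_{-\infty}^{\infty} |\widehat{f}(y)|^2\, dy \ = \ \int_{-\infty}^{\infty} \widehat{f}(y)\, \overline{\widehat{f}(y)}\, dy \ = \ \int_{-\infty}^{\infty} f(x)\, \overline{f(x)}\, dx \ = \ \int_{-\infty}^{\infty} |f(x)|^2\, dx ,
\]
which is exactly the asserted identity on $\mathcal{S}(\mathbb{R})$.

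Finally I would pass to $L^2(\mathbb{R})$: the Schwartz functions are dense there, and the computation above shows the map $f \mapsto \widehat{f}$ is an $L^2$-isometry on this dense subspace, hence it extends uniquely to a bounded (indeed isometric) linear map on all of $L^2(\mathbb{R})$; for $f \in L^1 \cap L^2$ one checks that this extension agrees with the integral definition of $\widehat{f}$ by approximating $f$ in both the $L^1$ and $L^2$ norms by Schwartz functions. The Plancherel identity then holds for every square-integrable $f$.

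The main obstacle is not any single calculation but the foundational care required to make Fourier inversion rigorous (the one genuine analytic input) and to verify that the abstract $L^2$-extension of the Fourier transform coincides with the concrete integral $\int f(x) e^{-2\pi i x y}\, dx$ on the overlap $L^1 \cap L^2$; once those points are in place, the isometry drops out of the multiplication formula in two lines. For the applications in this paper only the Schwartz (equivalently $L^1 \cap L^2$) case is needed, so the density step may be omitted if one prefers a self-contained statement.
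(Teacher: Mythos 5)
Your proof is correct: the paper does not prove this statement itself but simply quotes it as a standard result with a pointer to \cite{SS}, and your argument (multiplication formula on Schwartz functions, the substitution $g = \overline{\widehat{f}}$ with Fourier inversion giving $\widehat{g} = \overline{f}$, then extension to $L^2$ by density) is essentially the proof given in that reference. No gaps; the only caveat is cosmetic, namely that the paper's statement has typos ($E(t)$, $dt$) which you correctly read as the usual Plancherel identity for square-integrable $f$.
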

\begin{rem}
As a result of the polarization identity \footnote{The polarization identity relates the inner product of two vectors with the norm of the two vectors.}, for square-integrable functions $f$ and $g$, the Plancherel Theorem can be re-expressed as 
\begin{equation}
    \int_{-\infty}^{\infty} f(x)\overline{g(x)}dx \ = \ \int_{-\infty}^{\infty} \widehat{f}(y)\overline{\widehat{g}(y)}dy.
\end{equation}
\end{rem}

Recall the convolution of two functions $f$ and $g$ is defined by \be (f \ast g)(x) \ = \ \int_{-\infty}^\infty f(t) g(x-t) dx. \ee

\begin{thm}[Convolution Theorem]\label{convolutionthm}
For functions $f(t)$ and $g(t)$, 
\begin{equation}
    \mathcal{F}[f \ast g] \ = \ \mathcal{F}[f] \mathcal{F}[g].
\end{equation}
\end{thm}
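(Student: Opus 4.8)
The plan is to establish the identity directly from the definition of the Fourier transform, by writing out the convolution as an iterated integral and then interchanging the order of integration. First I would expand $\mathcal{F}[f \ast g](y)$ using both the definition of the Fourier transform and the definition of convolution, obtaining the double integral $\int_{-\infty}^\infty \left( \int_{-\infty}^\infty f(t)\, g(x-t)\, dt \right) e^{-2\pi i x y}\, dx$.

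Next I would apply Fubini's theorem to swap the two integrations, rewriting this as $\int_{-\infty}^\infty f(t) \left( \int_{-\infty}^\infty g(x-t)\, e^{-2\pi i x y}\, dx \right) dt$. In the inner integral I would then make the substitution $u = x - t$, with $du = dx$, which turns it into $\int_{-\infty}^\infty g(u)\, e^{-2\pi i (u+t) y}\, du = e^{-2\pi i t y}\, \widehat{g}(y)$. The key point is that $\widehat{g}(y)$ does not depend on $t$, so it factors out of the outer integral, leaving $\widehat{g}(y) \int_{-\infty}^\infty f(t)\, e^{-2\pi i t y}\, dt = \widehat{g}(y)\, \widehat{f}(y)$, which is precisely $(\mathcal{F}[f]\,\mathcal{F}[g])(y)$.

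The only real subtlety — and the step I would flag — is the appeal to Fubini's theorem: the interchange is licensed by an integrability hypothesis, for instance $f, g \in L^1(\mathbb{R})$, under which $\int_{-\infty}^\infty \int_{-\infty}^\infty |f(t)|\,|g(x-t)|\, dt\, dx = \|f\|_{1}\,\|g\|_{1} < \infty$, so Tonelli applies and the order of integration may be swapped. The statement as given suppresses this, so I would either add the hypothesis explicitly or remark that in every use of this theorem in the paper the relevant functions are Schwartz, hence absolutely integrable, and the interchange and the change of variables are fully justified. Every remaining step is a routine substitution or regrouping of factors, so no further difficulty arises.
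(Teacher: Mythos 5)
Your proof is correct and is exactly the standard argument (expand the double integral, justify interchange by Fubini--Tonelli under an $L^1$ or Schwartz hypothesis, substitute $u = x - t$, and factor); the paper itself gives no proof, citing \cite{SS} for this routine fact, and your argument matches the textbook one it relies on. Your flag about the suppressed integrability hypothesis is well taken, since the paper only ever applies the theorem to Schwartz-class test functions, where the interchange is fully justified.
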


\begin{defi}[Schwartz space]\label{Schwartz}
The Schwartz space is the set of all functions such that $f$ is infinitely differentiable and the derivatives of $f$ decay faster than any polynomial. A function is Schwartz if it is a function in the Schwartz space.
\end{defi}

\begin{defi}[Even Indicator Function]
The Even Indicator function is defined to be 
\begin{equation}
    1_{\rm even}(n) \ := \ \begin{cases}
        1 & \text{{\rm if} } x \text{{\rm is\ even}}\\
        0 & \text{{\rm if} } x \text{{\rm is\ odd}}.
    \end{cases}
\end{equation}
\end{defi}

Since we encounter integrals later which we cannot evaluate in closed form, we use Simpson's Rule to approximate the integral using only function values at certain points while having a low error term. 

\begin{thm}[Simpson's Rule]
Let $[a, b]$ be an interval that is split into $s$ equal sub-intervals with $s$ even and $h \ = \ (b-a)/s$. Then, the integral $\int_{a}^b f(x) dx$ is approximated by
\begin{equation}
    \int_{a}^b f(x) dx \ \approx \ \frac{h}{3}\left[f(x_0) + 4\sum_{j=1}^{\frac{n}{2}} f(x_{2j-1}) + 2\sum_{j=1}^{\frac{n}{2}-1} f(x_{2j})+f(x_n) \right]
\end{equation}
where $x_j=a+jh$ for $j = 0,1,2,...,n$ and the difference between the integral and the sum, ${\rm Err}(f)$, is bounded by 
\begin{equation}
    |{\rm Err}(f)| \ \le \ \frac{h^4}{180}(b-a)\max_{\epsilon \in [a, b]} |f^{(4)}(\mathcal{E})|.
\end{equation}
\end{thm}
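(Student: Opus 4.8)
The plan is to reduce the composite rule to a single ``panel'' estimate and then sum. Group the $n+1$ nodes into the $n/2$ consecutive triples $(x_{2j-2}, x_{2j-1}, x_{2j})$ for $j = 1, \dots, n/2$, each spanning an interval of width $2h$ (here $nh = b-a$, with $n$ even). First I would prove the local estimate
\[
\int_{x_{2j-2}}^{x_{2j}} f(x)\,dx \ = \ \frac{h}{3}\bigl(f(x_{2j-2}) + 4f(x_{2j-1}) + f(x_{2j})\bigr) \ - \ \frac{h^5}{90}\, f^{(4)}(\xi_j)
\]
for some $\xi_j \in (x_{2j-2}, x_{2j})$, under the (implicit) hypothesis $f \in C^4[a,b]$. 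Summing over $j$, the main terms combine into the quadrature sum in the statement — each odd-indexed node picks up weight $4$, the two endpoints weight $1$, and every interior even-indexed node is shared by two panels and so gets weight $2$ — while the errors accumulate to ${\rm Err}(f) = -\frac{h^5}{90}\sum_{j=1}^{n/2} f^{(4)}(\xi_j)$.

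For the panel estimate I would introduce, with $c := x_{2j-1}$ the midpoint and $t \in [0,h]$, the auxiliary function
\[
G(t) \ := \ \int_{c-t}^{c+t} f(x)\,dx \ - \ \frac{t}{3}\bigl(f(c-t) + 4f(c) + f(c+t)\bigr).
\]
Then $G(0) = 0$, and a direct computation (fundamental theorem of calculus on the integral, product rule on the rest, with the $\pm t$ arguments producing cancellations) gives $G'(0) = G''(0) = 0$ and
\[
G'''(t) \ = \ -\frac{t}{3}\bigl(f'''(c+t) - f'''(c-t)\bigr).
\]
If $m$ and $M$ denote the minimum and maximum of $f^{(4)}$ on $[c-h, c+h]$, the mean value theorem applied to $f'''$ gives $-\frac{2t^2}{3}M \le G'''(t) \le -\frac{2t^2}{3}m$ for $t \ge 0$; integrating this three times from $0$ (the three lower-order terms all vanish at $0$, and $t \ge 0$ preserves the inequalities) yields $-\frac{h^5}{90}M \le G(h) \le -\frac{h^5}{90}m$. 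Since $f^{(4)}$ is continuous, the intermediate value theorem produces $\xi_j \in [c-h, c+h]$ with $G(h) = -\frac{h^5}{90} f^{(4)}(\xi_j)$, which is precisely the local estimate. (An equivalent route is to Taylor-expand $f$ about $c$ to order $4$ with integral remainder, integrate the quartic exactly, and identify the kernel; the $G(t)$-device is preferable because the constant $1/90$ falls out automatically.)

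Finally I would combine the panel errors. Writing $m^\star, M^\star$ for the extrema of $f^{(4)}$ on all of $[a,b]$, each $f^{(4)}(\xi_j) \in [m^\star, M^\star]$, so the average $\frac{2}{n}\sum_{j=1}^{n/2} f^{(4)}(\xi_j)$ lies in $[m^\star, M^\star]$ and, by the intermediate value theorem again, equals $f^{(4)}(\mathcal{E})$ for some $\mathcal{E} \in [a,b]$. Hence ${\rm Err}(f) = -\frac{h^5}{90}\cdot \frac{n}{2}\, f^{(4)}(\mathcal{E}) = -\frac{(b-a)h^4}{180}\, f^{(4)}(\mathcal{E})$, using $nh = b-a$; taking absolute values and bounding $|f^{(4)}(\mathcal{E})| \le \max_{[a,b]}|f^{(4)}|$ gives the claimed bound $|{\rm Err}(f)| \le \frac{h^4}{180}(b-a)\max_{[a,b]}|f^{(4)}|$. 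The one genuinely delicate step is the panel estimate — specifically getting the differentiation of $G$ and the triple-integration bookkeeping right so that the constant is exactly $1/90$ (and noting that the argument needs $f \in C^4$, not merely the pointwise existence of $f^{(4)}$); the combination of the main terms, the weight count, and the two intermediate-value-theorem averaging steps are routine.
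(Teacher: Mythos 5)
Your proof is correct, but there is nothing in the paper to compare it against: Simpson's Rule appears there only as a background result in the Preliminaries, stated without proof and deferred to standard references, so the paper supplies no argument of its own. What you give is the classical composite-rule proof from numerical analysis: the single-panel identity with error $-\frac{h^5}{90}f^{(4)}(\xi_j)$ established via the auxiliary function $G(t)$ (your computation of $G'(0)=G''(0)=0$ and $G'''(t)=-\frac{t}{3}\bigl(f'''(c+t)-f'''(c-t)\bigr)$ is right, and the triple integration does produce the constant $1/90$), followed by the weight count $1,4,2,\dots,2,4,1$ when panels are concatenated and the intermediate-value averaging of the $f^{(4)}(\xi_j)$ to get $-\frac{(b-a)h^4}{180}f^{(4)}(\mathcal{E})$. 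You also correctly flag the hypothesis $f\in C^4[a,b]$, which the paper's statement leaves implicit, and you silently repair the statement's notational slip (the interval is split into $s$ subintervals but the formula is written with $n$; your convention $nh=b-a$ with $n$ even is the intended one). So: correct and complete, by the standard route; the paper simply cites the result rather than proving it.
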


\begin{thm}[Fubini's Theorem]
If $\iint_{X \times Y} |f(x, y)|d(x,y) < \infty$, 
\begin{equation}
    \iint_{X \times Y} |f(x, y|d(x, y) \ = \ \int_{X}\left(\int_{Y} f(x, y) dy\right)dx \ = \ \int_{Y}\left(\int_{X} f(x, y) dx\right)dy.
\end{equation}
\end{thm}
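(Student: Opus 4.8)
The plan is to deduce the stated (integrable) case from the non‑negative case, usually called Tonelli's theorem, which is where essentially all of the work sits. So first I would establish: if $g\colon X\times Y\to[0,\infty]$ is measurable for the product $\sigma$‑algebra, then the sections are measurable, $x\mapsto\int_Y g(x,y)\,dy$ and $y\mapsto\int_X g(x,y)\,dx$ are measurable, and
\[
\iint_{X\times Y} g\,d(x,y)\ =\ \int_X\!\left(\int_Y g(x,y)\,dy\right)dx\ =\ \int_Y\!\left(\int_X g(x,y)\,dx\right)dy,
\]
all three possibly equal to $+\infty$. This requires $X$ and $Y$ to be $\sigma$‑finite, which is automatic in every use we make of the theorem, since our measures are Lebesgue measure on $\mathbb{R}$ (or counting measure on a countable index set).

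The proof of Tonelli is the standard bootstrapping argument. First check the identity for $g=\mathbf{1}_{A\times B}$ with $A,B$ of finite measure, where it collapses to $\mu(A)\nu(B)=\mu(A)\nu(B)$. Next show that the collection of product‑measurable sets $E$ of finite measure for which $\mathbf{1}_E$ has measurable sections and satisfies the identity forms a $\lambda$‑system (monotone class) containing the finite rectangles; the $\pi$‑$\lambda$ theorem then extends the identity to all product‑measurable $E$ of finite measure, and exhausting by a $\sigma$‑finite partition extends it to all product‑measurable $E$. Linearity then gives the identity for non‑negative simple functions, and two applications of the monotone convergence theorem — one for the double integral, one for the iterated integrals against an increasing sequence of simple functions — give it for every non‑negative measurable $g$.

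Granting Tonelli, I would close the integrable case as follows. Apply Tonelli to $g=|f|$: by hypothesis $\iint_{X\times Y}|f|<\infty$, so $\int_X\big(\int_Y|f(x,y)|\,dy\big)dx<\infty$, hence $\int_Y|f(x,y)|\,dy<\infty$ for a.e.\ $x$, i.e.\ $y\mapsto f(x,y)$ is integrable for a.e.\ $x$, and symmetrically in the other order. Now write $f=f^{+}-f^{-}$ (and in the complex case also split real and imaginary parts). Each of $f^{+},f^{-}$ is $\le|f|$, so Tonelli applies to each and all their double and iterated integrals are finite; subtracting, and using that the inner integrals of $f^{+}$ and $f^{-}$ are finite a.e.\ so that the pointwise subtraction is legitimate a.e., linearity of the outer integral yields both displayed iterated integrals and shows each equals $\iint_{X\times Y}f\,d(x,y)$.

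The main obstacle is the measurability bookkeeping inside Tonelli: that for product‑measurable $g$ the section $g(x,\cdot)$ is $Y$‑measurable and $x\mapsto\int_Y g(x,y)\,dy$ is $X$‑measurable. This is exactly what the $\pi$‑$\lambda$/monotone‑class step supplies, and it is also the step that forces the $\sigma$‑finiteness hypothesis — without it the iterated integrals need not agree. Once Tonelli is in hand, the decomposition $f=f^{+}-f^{-}$, the a.e.\ finiteness of the sections, and the final subtraction are routine.
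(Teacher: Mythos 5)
Your proposal is correct: the Tonelli-first argument (rectangles, then the $\pi$--$\lambda$/monotone-class extension to product-measurable sets, then simple functions and monotone convergence), followed by the decomposition $f=f^{+}-f^{-}$ and the a.e.\ finiteness of the sections, is the standard proof, and you rightly flag the $\sigma$-finiteness hypothesis that the paper's uses (Lebesgue measure on $\mathbb{R}$) satisfy. The paper itself offers no proof of this statement --- it is recorded in the Preliminaries as a standard result with a reference to \cite{SS} --- and your argument is essentially the one found there, so there is nothing to reconcile. One small point: as printed in the paper the left-hand side of the display reads as the integral of $|f|$ (with a stray absolute-value bar); the intended and correct statement, which is the one you proved, equates $\iint_{X\times Y} f\,d(x,y)$ with the two iterated integrals under the hypothesis $\iint_{X\times Y}|f|\,d(x,y)<\infty$.
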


\begin{defi}[Double Factorial]
For positive integer $n$, 
\begin{equation}
     n!! \ := \ \begin{cases}
        n(n-2)(n-4)\cdots4\cdot2 & \text{{\rm for\ $n$\ even}} \\
        n(n-2)(n-4)\cdots3\cdot1 & \text{{\rm for\ $n$\ odd}}.
    \end{cases}
\end{equation}

\end{defi}

\begin{defi}\label{fnr}
Define the family of forms in $\mathcal{F}_N$ that vanish to exact order $r$ to be $\mathcal{F}_{N, r}$.
\end{defi}

\begin{defi}\label{pFN} 
Define the percent of $f \in \mathcal{F}_N$ that vanish to exact order $r$ to be
\begin{equation}
p_{r}(\mathcal{F}_N) \ := \  \frac{|\mathcal{F}_{N,r}|}{|\mathcal{F}_N|}.
\end{equation}
\end{defi}

\begin{defi} For a fixed $r$, define $p_r(\mathcal{F})$ to be the limit of the percent of forms in $\mathcal{F_N}$ whose order of vanishing is $r$ as $N$ tends to infinity through the primes: 
\begin{equation}
    p_r(\mathcal{F}) \ := \ \lim_{N \to \infty} \frac{|\mathcal{F}_{N,r}|}{|\mathcal{F}_N|}.
\end{equation}
\end{defi}

\begin{defi} For a fixed $r$, define $q_r(\mathcal{F})$ to be the limit of the percent of forms in $\mathcal{F_N}$ that vanish to order at least $r$ as $N$ tends to infinity through the primes: 
\begin{equation}
    q_r(\mathcal{F}) \ = \ \sum_{i \geq r} p_{i}(\mathcal{F}).
\end{equation}
\end{defi}

\begin{defi}
The one-level density of an $L$-function $L(s, f)$ is 
\begin{equation}
    D(f, \phi)\ :=\ \sum_{\gamma_f} \phi\left(\frac{\gamma_f}{2\pi}\log{c_f}\right),
\end{equation}
where $c_f$ is the analytic conductor of $f$. The average or expectation of $D(f, \phi)$ over a family $\mathcal{F}_N$ is 
\begin{equation}
    E(\mathcal{F}_N, \phi) \ := \ \frac{1}{|\mathcal{F}_N|}\sum_{f \in \mathcal{F}_N} D(f, \phi).
\end{equation}
\end{defi}

\begin{defi}
The mean of the $1$-level density of a family is denoted $\mathcal{F}_N$ is $\mu(\phi, \mathcal{F}_N)$. 
\end{defi}

%%%%%%%%%%%%%%%%%%%%%%%%%%%%%%%%%%%%%%%%%%%%%%%%%%%%%%%%%%%%%%%%%%%%%%%%%%%%%%%%%%%%%%%%%%%%%%%%%%%%%%%%%%%%%%%%%%%%%%%%%%%%%%%%%
%%%%%%%%%%%%%%%%%%%%%%%%%%%%%%%%%%%%%%%%%%%%%%%%%%%%%%%%%%%%%%%%%%%%%%%%%%%%%%%%%%%%%%%%%%%%%%%%%%%%%%%%%%%%%%%%%%%%%%%%%%%%%%%%%
%%%%%%%%%%%%%%%%%%%%%%%%%%%%%%%%%%%%%%%%%%%%%%%%%%%%%%%%%%%%%%%%%%%%%%%%%%%%%%%%%%%%%%%%%%%%%%%%%%%%%%%%%%%%%%%%%%%%%%%%%%%%%%%%%
\section{Bounds on the Order of Vanishing From the 1-Level Density}

As we use the same functions and similar techniques as \cite{ILS}, We quickly summarize their work to obtain bounds on the order of vanishing using the $1$-level density, and then discuss the complications that arise in extending these arguments to the $n$-level densities.

\begin{thm}\label{thm:onelevelboundpr}[Bounds on the $1$-level density from \cite{ILS}]
Let $\phi$ be a non-negative, even Schwartz function with ${\rm supp}(\hphi) \subset (-\sigma, \sigma)$ for some finite $\sigma$. Let $\mathcal{G}$ be the group associated to the family $\{\mathcal{F}_N\}$ (i.e., Unitary, Symplectic, Orthogonal, \normalfont \textrm{SO(even)}, \textrm{SO(odd))}. \emph{Set}
\begin{equation}\label{gDef}
g_{\mathcal{F}}(\phi) \ := \ \int_{-\infty}^{\infty} \widehat{\phi}(y)\widehat{W}_{\mathcal{G}(\mathcal{F})}(y)dy.
\end{equation}

\emph{For a given $r$, as $N\to \infty$ the percent of forms in the family $\mathcal{F}_N$ that vanish to order exactly $r$ is bounded by}
\begin{equation}\label{pBound1}
p_{r} \ \le \ \frac{1}{r}(g_{\mathcal{F}}(\phi)).
\end{equation}
\end{thm}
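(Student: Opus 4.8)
The plan is to exploit the non-negativity of $\phi$ together with GRH for $L(s,f)$, which forces every term in the sum defining the $1$-level density $D(f,\phi)$ to be non-negative. First I would recall that under GRH the nontrivial zeros of $L(s,f)$ are of the form $1/2 + i\gamma_f$ with $\gamma_f \in \mathbb{R}$; a zero of order $r$ at the central point $s = 1/2$ contributes exactly $r$ copies of $\gamma_f = 0$ to the list of ordinates, each contributing $\phi(0)$ to $D(f,\phi)$. Since $\phi \geq 0$ everywhere, we may discard all the contributions from nonzero $\gamma_f$ and retain the lower bound
\begin{equation}
D(f,\phi) \ \geq \ (\operatorname{ord}_{s=1/2} L(s,f)) \cdot \phi(0).
\end{equation}
Restricting the sum over $f \in \mathcal{F}_N$ to those forms vanishing to order exactly $r$ (a subfamily of size $p_r(\mathcal{F}_N)\,|\mathcal{F}_N|$ by Definitions \ref{fnr} and \ref{pFN}), and using that $D(f,\phi) \geq 0$ for the remaining forms, gives
\begin{equation}
\frac{1}{|\mathcal{F}_N|}\sum_{f \in \mathcal{F}_N} D(f,\phi) \ \geq \ p_r(\mathcal{F}_N) \cdot r \cdot \phi(0),
\end{equation}
i.e. $E(\mathcal{F}_N, \phi) \geq r\,\phi(0)\,p_r(\mathcal{F}_N)$.

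Next I would take the limit as $N \to \infty$ through the primes. The left-hand side converges to the random-matrix-theory prediction: by the known $1$-level density results for the family (the $n=1$ case underlying Theorem \ref{2nmainfirst}, originally \cite{ILS}), one has
\begin{equation}
\lim_{\substack{N\to\infty \\ N \text{ prime}}} E(\mathcal{F}_N, \phi) \ = \ \int_{-\infty}^{\infty} \widehat{\phi}(y)\,\widehat{W}_{\mathcal{G}(\mathcal{F})}(y)\,dy \ = \ g_{\mathcal{F}}(\phi),
\end{equation}
valid provided $\operatorname{supp}(\widehat{\phi}) \subset (-\sigma,\sigma)$ lies in the range for which the arithmetic sums on the Fourier side can be evaluated (this is exactly the hypothesis on $\sigma$). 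Combining this with the inequality above and passing to the limit yields $g_{\mathcal{F}}(\phi) \geq r\,\phi(0)\,p_r(\mathcal{F})$. After normalizing so that $\phi(0) = 1$ (which we may do since $\phi(0) > 0$, as rescaling $\phi$ by a positive constant rescales both sides equally and does not affect the bound), this rearranges to $p_r \leq \tfrac{1}{r}\,g_{\mathcal{F}}(\phi)$, which is \eqref{pBound1}.

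The main obstacle is not in the elementary inequality manipulations but in justifying the limit: one must invoke that the $1$-level density of the cuspidal newform family $\mathcal{F}_N$ (split by sign) converges to the orthogonal / $\mathrm{SO}(\text{even})$ / $\mathrm{SO}(\text{odd})$ density, which requires GRH for $L(s,f)$ and for the auxiliary Dirichlet $L$-functions appearing in the explicit-formula computation, and is only known unconditionally for $\operatorname{supp}(\widehat{\phi})$ in a restricted range. A secondary technical point is interchanging the limit with the (finite, for each $N$) sum over zeros and ensuring the nonzero-$\gamma_f$ terms are genuinely discarded rather than merely bounded — this is immediate from pointwise non-negativity of $\phi$, so no dominated-convergence argument is needed on that side; the only analytic input is the convergence of the averaged density itself, which we take as given from \cite{ILS} (equivalently the $n=1$ instance of Theorem \ref{2nmainfirst}).
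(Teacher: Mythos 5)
Your proposal is correct and follows essentially the same route as the paper's argument: bound $D(f,\phi)$ below by $r\,\phi(0)$ using non-negativity of $\phi$, average over the family, invoke the Iwaniec--Luo--Sarnak density theorem for the limiting mean, and normalize $\phi(0)=1$. The only cosmetic difference is that the paper states the limiting mean in physical space as $\int \phi(x)W_{\mathcal{G}(\mathcal{F})}(x)\,dx$ and then applies Plancherel to identify it with $g_{\mathcal{F}}(\phi)$, whereas you quote the result directly on the Fourier side.
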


\begin{comment}

\begin{proof}[Proof of Theorem \ref{thm:onelevelboundpr}]
From the Density Theorem from \cite{ILS}, we get
\begin{equation}\label{expectedValueIntegral}
    \lim_{N \to \infty} E(\mathcal{F}_N, \phi)\ =\ \int_{-\infty}^{\infty} \phi(x)W_{\mathcal{G}(\mathcal{F})}(x)dx.
\end{equation}

By the Plancherel Theorem, we get 
\begin{equation}\label{Plancherel}
    \int_{-\infty}^{\infty} \phi(x)W_{\mathcal{G}(\mathcal{F})}dx \ = \ \int_{-\infty}^{\infty} \widehat{\phi}(y)\widehat{W}_{\mathcal{G}(\mathcal{F})}(y)dy \ = 
     \ g_{\mathcal{F}}(\phi).
\end{equation}

Plugging in \eqref{Plancherel} into \eqref{expectedValueIntegral}, we get
\begin{equation}\label{limG}
    \lim_{N \to \infty} E(\mathcal{F}_N, \phi)\ =\ g_{\mathcal{F}}(\phi).
\end{equation}

Using \eqref{limG} for test functions $\phi(x)$ with $\phi(x) \geq 0$, $\phi(0)=1$, and support of $\widehat{\phi}(y)$ compact, we get
\begin{equation}
    \sum_{j=1}^{\infty} jp_{j}(\mathcal{F}_N)\ \le\ g_{\mathcal{F}}(\phi)
\end{equation}
from the definition of expectation. For a given $r$, the contribution to the sum on the left by all $jp_j(\mathcal{F}_N)$ such that $j \neq r$ will be positive since $j$ and $p_j(\mathcal{F}_n)$ are both positive for all $j$. Therefore, we can remove this contribution to the sum to get 
\begin{equation}
    rp_r \ \le \ g_{\mathcal{F}}(\phi).
\end{equation}

Therefore, we get the upper bound 
\begin{equation}
    p_{r}(\mathcal{F}_n) \ \le\ \frac{1}{r}g_{\mathcal{F}}(\phi).
\end{equation}
\end{proof}
\end{comment}

\begin{cor}\label{specificBoundsP} Let $\phi$ be the naive test function with ${\rm supp}(\hphi) \subset (-v_n, v_n)$, Then
\normalfont
\begin{eqnarray}
p_{r}(\mathcal{F}) & \  \le \ & \frac{1}{r}\left(\frac{1}{v_n}+\frac{1}{2}\right) \ \ \textrm{ for G = O} \nonumber\\
p_{r}(\mathcal{F}) & \ \le \ & \left\{
\begin{array}{lr}
\frac{1}{r}\left(\frac{1}{v_n}+\frac{1}{2}\right) & \text{if } v_n \ \le \ 1\\
 \frac{1}{r}\left(\frac{2}{v_n}-\frac{1}{2v_n^2}+\epsilon\right) & \text{if } v_n \geq 1
\end{array}
\right\}\ \  \textrm{for G = SO(even)} \nonumber\\
p_{r}(\mathcal{F}) & \ \le \ & \left\{
\begin{array}{lr}
\frac{1}{r}\left(\frac{1}{v_n}+\frac{1}{2}+\epsilon\right) & \text{if } v_n \ \le \ 1\\
 \frac{1}{r}\left(1+\frac{1}{2v_n^2}\right) & \text{if } v_n \ \geq \ 1
\end{array}
\right\}  \ \ \textrm{for G = SO(odd)}.
\end{eqnarray}
\end{cor}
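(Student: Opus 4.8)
The plan is to specialize Theorem \ref{thm:onelevelboundpr} to $\phi = \phi_{\rm naive}$ with $\widehat{\phi}_{\rm naive}(y) = \frac{1}{v_n}\bigl(1 - \tfrac{|y|}{v_n}\bigr)$ supported on $|y| < v_n$ (note: the statement of the naive test function as written has a typo — it should read $\frac{1}{v_n}(1 - |y|/v_n)$, not $\frac{1}{v_n}(y - |y|/v_n)$, since $\widehat{\phi}$ must be even — and I would silently use the correct even version). By \eqref{pBound1}, it suffices to compute $g_{\mathcal{F}}(\phi) = \int_{-\infty}^{\infty} \widehat{\phi}_{\rm naive}(y)\,\widehat{W}_{\mathcal{G}(\mathcal{F})}(y)\,dy$ for each of the three symmetry groups $G \in \{\mathrm{O}, \mathrm{SO(even)}, \mathrm{SO(odd)}\}$, and then divide by $r$. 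So the corollary reduces to three explicit integral evaluations.

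First I would record the densities from Katz–Sarnak / \cite{ILS}: $\widehat{W}_{\mathrm{O}}(y) = \delta(y) + \tfrac12$; $\widehat{W}_{\mathrm{SO(even)}}(y) = \delta(y) + \tfrac12\eta(y)$ where $\eta(y) = 1$ for $|y| < 1$ and $\tfrac12$ for $|y| = 1$ (equivalently the nonsmooth part contributes $\tfrac12$ on $[-1,1]$); and $\widehat{W}_{\mathrm{SO(odd)}}(y) = \delta(y) - \tfrac12\eta(y) + 1$. The $\delta(y)$ term always contributes $\widehat{\phi}_{\rm naive}(0) = 1/v_n$ to the integral. For $G = \mathrm{O}$ the remaining piece is $\tfrac12\int_{-v_n}^{v_n}\widehat{\phi}_{\rm naive}(y)\,dy = \tfrac12 \cdot \phi_{\rm naive}(0) = \tfrac12$ (since $\int \widehat{\phi} = \phi(0) = 1$, or by direct triangle-area computation $\tfrac{1}{v_n}\cdot v_n \cdot \tfrac12 \cdot 2 = 1$), giving $g_{\mathcal{F}} = \tfrac{1}{v_n} + \tfrac12$. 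For $\mathrm{SO(even)}$ and $\mathrm{SO(odd)}$ the extra term is $\pm\tfrac12\int_{-1}^{1}\widehat{\phi}_{\rm naive}(y)\,dy$ (plus, for the odd case, $\int_{-v_n}^{v_n}\widehat{\phi}_{\rm naive} = 1$), and here the case split on whether $v_n \le 1$ or $v_n \ge 1$ enters: if $v_n \le 1$ the support $[-v_n,v_n]$ sits inside $[-1,1]$ so $\int_{-1}^1 \widehat{\phi}_{\rm naive} = 1$ and one recovers $\tfrac{1}{v_n} \pm \tfrac12$ type bounds; if $v_n \ge 1$ one instead integrates the triangle $\frac{1}{v_n}(1-|y|/v_n)$ only over $[-1,1]$, yielding $\tfrac{2}{v_n} - \tfrac{1}{2v_n^2}$ by an elementary area computation, and then $g_{\mathcal{F}} = \tfrac{1}{v_n} - (\tfrac{1}{v_n} - \tfrac{1}{2v_n^2}) = \tfrac{1}{2v_n^2}$ for the positive (SO(even)) combination $\tfrac{1}{v_n} - \tfrac12(\tfrac{2}{v_n} - \tfrac{1}{2v_n^2})$... wait, the signs must be tracked carefully to land on $\tfrac{2}{v_n} - \tfrac{1}{2v_n^2}$ for SO(even) and $1 + \tfrac{1}{2v_n^2}$ for SO(odd), and I would do this bookkeeping explicitly.

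There is no deep obstacle here — every integral is the area of a triangle or trapezoid — so the "hard part" is purely administrative: getting the sign conventions for $\widehat{W}_{\mathrm{SO(even)}}$ versus $\widehat{W}_{\mathrm{SO(odd)}}$ right, handling the $\delta$-mass at the origin consistently with the Plancherel step in Theorem \ref{thm:onelevelboundpr}, and correctly matching the two regimes at the boundary $v_n = 1$ (where both cases should agree). The $\epsilon$ appearing in the statement presumably absorbs a harmless $\phi$-dependent correction or the distinction between $\eta(y)$ at $|y|=1$ and the open-support convention; I would note where it comes from rather than chase it. Finally, dividing each $g_{\mathcal{F}}(\phi)$ by $r$ via \eqref{pBound1} gives exactly the three displayed bounds, completing the proof.
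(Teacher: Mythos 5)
Your proposal is correct and follows essentially the same route as the paper: specialize Theorem \ref{thm:onelevelboundpr}'s bound $p_r \le \frac{1}{r}g_{\mathcal{F}}(\phi)$ from \eqref{pBound1} to the naive test function and evaluate $g_{\mathcal{F}}(\phi)=\int \widehat{\phi}\,\widehat{W}_{\mathcal{G}(\mathcal{F})}$ group by group, which is precisely what the paper does (it simply records the resulting values of $g_{\mathcal{F}}(v_n)$ in \eqref{gvals} and substitutes). The only blemish is the bookkeeping slip you yourself flag: for $v_n\ge 1$ the triangle integral is $\int_{-1}^{1}\widehat{\phi}_{\rm naive}(y)\,dy=\frac{2}{v_n}-\frac{1}{v_n^2}$ (not $\frac{2}{v_n}-\frac{1}{2v_n^2}$), and SO(even) carries the $+\frac{1}{2}\eta$ sign, so $g=\frac{1}{v_n}+\frac{1}{2}\left(\frac{2}{v_n}-\frac{1}{v_n^2}\right)=\frac{2}{v_n}-\frac{1}{2v_n^2}$, while SO(odd) gives $\frac{1}{v_n}-\frac{1}{2}\left(\frac{2}{v_n}-\frac{1}{v_n^2}\right)+1=1+\frac{1}{2v_n^2}$, exactly the stated bounds.
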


\begin{proof}[Proof of Corollary \ref{specificBoundsP}]
    To use Theorem \ref{thm:onelevelboundpr} we need to choose a good pair of functions, $\phi$ and $\hphi$. In \cite{ILS} the authors remark that a particularly good choice is the following (what we call the naive test function):
\begin{equation}\label{fpair}
\phi_v(x) \ = \ \left(\frac{\sin(\pi vx)}{\pi vx}\right)^2, \ \ \ \  \widehat{\phi}_v(y) \ = \ \frac{1}{v_n}\left(1-\frac{|y|}{v_n}\right)
\end{equation}
for $|y|<v$. 

To find the bounds on the order of vanishing, we first must find $g$ for specific cases. From \eqref{gDef}, we get
\begin{eqnarray}\label{gvals}
g_{\mathcal{F}}(v_n) & \ = \ & \frac{1}{v_n}+\frac{1}{2} \ \  \textrm{ for G = O} \nonumber\\
g_{\mathcal{F}}(v_n) & \ = \ & \left\{
\begin{array}{lr}
\frac{1}{v_n}+\frac{1}{2} & \text{if } v_n \le 1\\
 \frac{2}{v_n}-\frac{1}{2v_n^2} & \text{if } v_n \geq 1
\end{array}
\right\} \ \  \textrm{for G = SO(even)} \nonumber\\
g_{\mathcal{F}}(v_n) & \ = \ & \left\{
\begin{array}{lr}
\frac{1}{v_n}+\frac{1}{2}, & \text{if } v_n \le 1\\
 1+\frac{1}{2v_n^2} & \text{if } v_n \geq 1
\end{array}
\right\} \ \ \textrm{for G = SO(odd)}.
\end{eqnarray}

We now want to find the bounds for the order of vanishing as a function of the support $v$. Substituting in our values for $g_{\mathcal{F}}(v)$ from \eqref{gvals} into \eqref{pBound1} we get
\begin{eqnarray}\label{specialPBoundsfirst}
p_{r}(\mathcal{F}) & \  \le \ & \frac{1}{r}\left(\frac{1}{v_n}+\frac{1}{2}\right) \ \ \textrm{ for G = O} \nonumber\\
p_{r}(\mathcal{F}) & \ \le \ & \left\{
\begin{array}{lr}
\frac{1}{r}\left(\frac{1}{v_n}+\frac{1}{2}\right) & \text{if } v \ \le \ 1\\
 \frac{1}{r}\left(\frac{2}{v_n}-\frac{1}{2v_n^2}\right) & \text{if } v \geq 1
\end{array}
\right\}\ \  \textrm{for G = SO(even)} \nonumber\\
p_{r}(\mathcal{F}) & \ \le \ & \left\{
\begin{array}{lr}
\frac{1}{r}\left(\frac{1}{v_n}+\frac{1}{2}\right) & \text{if } v \ \le \ 1\\
 \frac{1}{r}\left(1+\frac{1}{2v_n^2}\right) & \text{if } v_n \ \geq \ 1
\end{array}
\right\}  \ \ \textrm{for G = SO(odd)}.
\end{eqnarray}
\end{proof}

% \begin{rem}
% For the bounds in \eqref{specialPBoundsfirst} to be nontrivial (i.e.,  less than 1), we obtain
%     \normalfont
%     \begin{eqnarray}
% v_n & \ > \ & \frac{2}{2r-1} \ \ \textrm{ for G = O} \nonumber\\
% v_n & \ > \ & \left\{
% \begin{array}{lr}
% \frac{2}{2r-1} & \text{if } v \le 1\\
%  \frac{4-\sqrt{16-8r}}{4r} & \text{if } v \geq 1
% \end{array}
% \right\} \ \ \textrm{ for G = SO(even)} \nonumber\\
% v_n  & \ > \ & \left\{
% \begin{array}{lr}
% \frac{2}{2r-1} & \text{if } v \ \le \ 1\\
%  \sqrt{\frac{1}{2r-1}} & \text{if } v \ \geq \ 1
% \end{array}
% \right\} \ \ \textrm{for G = SO(odd)}. 
% \end{eqnarray}
% \end{rem}

The best results to date give the support for the $1$-level density to be ${\rm supp}(\hphi) \subset (-2, 2)$. As a result, we get
\begin{eqnarray}\label{specialPBounds}
p_{r}(\mathcal{F}) & \  \le \ & \frac{1}{r} \ \ \textrm{ for G = O} \nonumber\\
p_{r}(\mathcal{F}) & \ \le \ & 
 \frac{7}{8r}
\ \  \textrm{for G = SO(even)} \nonumber\\
p_{r}(\mathcal{F}) & \ \le \ & 
 \frac{9}{8r}
  \ \ \textrm{for G = SO(odd)}.
\end{eqnarray}

The test function $\phi_v(x)$ is not the optimal test function that we can use. The function that satisfies 
\begin{equation}
    \widehat{\phi}_{\rm optimal}(y) \ := \ (f_0 \ast \overline{f_0})(y)
\end{equation}
for 
\begin{equation}
    f_0(x) \ := \ \frac{\cos\left(\frac{|x|}{2}-\frac{\pi+1}{4}\right)}{\sqrt{2}\sin\left(\frac{1}{4}\right)+\sin\left(\frac{\pi+1}{4}\right)} \ \ , \ \ 0 \le |x| \le 1
\end{equation}
for $G={\rm SO(even)}$ and 
\begin{equation}
    f_0(x) \ := \ \frac{\cos\left(\frac{|x|}{2}+\frac{\pi-1}{4}\right)}{3\sin\left(\frac{\pi+1}{4}\right)-2\sin\left(\frac{\pi-1}{4}\right)} \ \ , \ \ 0 < |x| < 1
\end{equation}
for $G={\rm SO(odd)}$ from \cite{ILS} has been proven to be the optimal test function to use for the $1$-level density and can create better bounds. We now want to calculate the test function using the values provided by \cite{ILS} and compare the bounds from the optimal test function to that of the naive test function. 

\begin{lem}
Let 
\begin{equation}
    f(x) \ := \ \frac{\cos\left(\frac{x}{2}-a\right)}{b} \ \ , \ \ 0 \le |x| \le 1.
\end{equation}
\begin{eqnarray}
    (f \ast \overline{f})(y) & \ = \ & \frac{1}{b^2}\left[\frac{1}{2}(|y|-1)\cos\left(\frac{|y|}{2}\right)+
    \cos\left(\frac{1}{2}-2a\right)\sin\left(\frac{1-|y|}{2}\right) \right. \nonumber\\ & & \ + \left. \frac{1}{2}|y|\cos\left(2a-\frac{|y|}{2}\right)+\sin\left(\frac{|y|}{2}\right)\right]
\end{eqnarray}
for $0 \le |y| \le 1$ and 
\begin{equation}
    (f \ast \overline{f})(y) \ = \ \frac{1}{b^2}\left[\frac{1}{2}(|y|-2)\sin\left(\frac{1-|y|}{2}\right)+\sin\left(\frac{1-|y|}{2}\right)\right]
\end{equation}
for $1 \le |y| \le 2$.
\end{lem}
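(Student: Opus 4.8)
The plan is to compute the convolution straight from its definition, using the finite support of $f$ to cut the integral down to elementary trigonometry. Since $f$ is real, $\overline{f}=f$, so $(f\ast\overline{f})(y)=\int_{-\infty}^{\infty} f(t)\,f(y-t)\,dt$. Here the argument of the cosine is understood as a function of $|x|$ (matching the functions $f_0$ appearing in the application), so $f$ is even; substituting $t\mapsto -t$ then shows $(f\ast\overline{f})(-y)=(f\ast\overline{f})(y)$, and it suffices to work with $y\ge 0$ and replace $y$ by $|y|$ at the end. Because $\supp(f)\subseteq[-1,1]$, the product $f(t)f(y-t)$ is supported where $t\in[-1,1]$ \emph{and} $y-t\in[-1,1]$; for $0\le y\le 2$ this forces $t\in[y-1,1]$, and $(f\ast\overline{f})(y)=0$ for $|y|>2$. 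The two cases in the statement come from whether the arguments $t$ and $y-t$ of the (even) cosines keep a constant sign on $[y-1,1]$: for $1\le y\le 2$ the interval $[y-1,1]$ lies in $[0,1]$ and no sign change occurs, whereas for $0\le y\le 1$ the interval straddles $t=0$ and $y-t$ vanishes at $t=y$.

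First I would dispose of the range $1\le y\le 2$. There every $t\in[y-1,1]$ has $t\ge 0$ and $0\le y-t\le 1$, so the integrand is simply $\cos(t/2-a)\cos((y-t)/2-a)/b^2$. Applying the product-to-sum identity $\cos A\cos B=\tfrac12[\cos(A-B)+\cos(A+B)]$ with $A=t/2-a$, $B=(y-t)/2-a$ gives $A-B=t-y/2$ and $A+B=y/2-2a$ (constant in $t$), so each of the two resulting terms integrates in closed form over $[y-1,1]$; a couple of applications of $\sin(-\theta)=-\sin\theta$ and the angle-addition formula then put the answer in the stated shape.

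Next comes the range $0\le y\le 1$, which is where the real work is. Now I would partition $[y-1,1]=[y-1,0]\cup[0,y]\cup[y,1]$; on each piece the signs inside the two absolute values are fixed, so the integrand is once again an honest product of cosines with linear arguments, and each of the three integrals is handled exactly as above (product-to-sum, then integrate termwise). The main obstacle is the final bookkeeping: keeping the endpoints and the signs of the absolute-value arguments straight across the three subintervals, and then repeatedly using the angle-addition identities together with the parity of $\sin$ and $\cos$ to collapse the nine sinusoidal terms into the compact four-term expression. To guard against sign or endpoint slips I would check continuity of the two case-formulas at $y=1$, that the $1\le y\le 2$ formula vanishes at $y=2$ (forced by $\supp(f\ast\overline{f})\subseteq[-2,2]$), and that at $y=0$ the result reduces to $\tfrac{1}{b^2}\int_{-1}^{1}\cos(|t|/2-a)^2\,dt$.
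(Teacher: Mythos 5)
Your proposal is correct and follows essentially the same route as the paper's proof: write $(f\ast\overline{f})(y)=\int_{y-1}^{1}f(t)f(y-t)\,dt$ using $\overline{f}=f$ and evenness to reduce to $y\ge 0$, treat $1\le y\le 2$ directly since both cosine arguments keep a fixed sign there, and for $0\le y\le 1$ split $[y-1,1]$ into $[y-1,0]\cup[0,y]\cup[y,1]$ so the absolute values can be removed before applying product-to-sum identities and integrating termwise. The only difference is your added consistency checks at $y=0$, $y=1$ and $y=2$, which the paper does not carry out and which are a sensible safeguard given the delicate sign and endpoint bookkeeping in this computation.
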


\begin{proof}
Since $f$ is a real-valued function, $\overline{f} \ = \ f$. Therefore, we want to find $(f \ast f)(y)$. Since our resulting function is an even function with support $v_n=2$, we consider the values of $y \in [0,2]$ and reflect over the $y$-axis to find the values of $y \in [-2,0]$. Using the definition of a convolution, we get
\begin{equation}
    (f \ast f)(y) \ = \ \frac{1}{b^2}\int_{y-1}^{1} \cos\left(\frac{|x|}{2}-A\right)\cos\left(\frac{|y-x|}{2}-A\right) dx.
\end{equation}
We first consider the case of $y \in [0,1]$. We proceed to split the integral into 3 cases: $x<0$, $x>0$ and $y \geq x$, $x>0$ and $y < x$. The contribution to the integral from each case is $V_1$, $V_2$, and $V_3$ respectively. 

For $x < 0$, we get
\begin{align}
    V_1 \ &= \ \frac{1}{b^2}\int_{y-1}^{0} \cos\left(\frac{y}{2}\right) \cos\left(x+2a-\frac{y}{2}\right)dx \nonumber\\ \ &= \ \frac{1}{b^2}\left[\frac{1}{2}(y-1)\cos\left(\frac{y}{2}\right)+\cos\left(\frac{1}{2}-2a\right)\sin\left(\frac{1-y}{2}\right)\right].
\end{align}

From $x > 0$ and $y \geq x$, we get
\begin{align}
    V_2 \ &= \ \frac{1}{b^2}\int_{0}^{y} \cos\left(\frac{2x-y}{2}\right)\cos\left(\frac{y}{2}-2a\right)dx \nonumber\\ \ &= \ \frac{1}{b^2}\left[\frac{1}{2}y\cos\left(2a-\frac{y}{2}\right)+\sin\left(\frac{y}{2}\right)\right].
\end{align}

From $x > 0$ and $y < x$, we get
\begin{align}
    V_3 \ &= \ \frac{1}{b^2}\int_{y}^{1} \cos\left(\frac{x}{2}-a\right)\cos\left(\frac{y-x}{2}-r\right)dx \nonumber\\ \ &= \ \frac{1}{b^2}\left[\frac{1}{2}(y-1)\cos\left(\frac{y}{2}\right)+\cos\left(\frac{1}{2}-2a\right)\sin\left(\frac{1-y}{2}\right)\right].
\end{align}

Since $V_1 = V_2$, we get
\begin{align}
    (f \ast f)(y) \ &= \ 2V_1+V_3 \nonumber\\ 
    \ &= \ \frac{1}{b^2}\left[\frac{1}{2}(y-1)\cos\left(\frac{y}{2}\right)+\cos\left(\frac{1}{2}-2a\right)\sin\left(\frac{1-y}{2}\right) \right. \nonumber\\
    \ & \ + \left. \frac{1}{2}y\cos\left(2a-\frac{y}{2}\right)+\sin\left(\frac{y}{2}\right)\right]
\end{align}
for $0 \le |y| \le 1$. 
We now consider the case of $y \in [0,2]$. Since $x \in [0,1]$, $x>0$ and $y-x>0$. Therefore, for $y \in [1,2]$, we get
\begin{align}
    (f \ast f)(y) \ &= \ \frac{1}{b^2}\int_{y-1}^{1} \cos\left(\frac{x}{2}-a\right)\cos\left(\frac{y-x}{2}-a\right)dx \nonumber\\ \ &= \ \frac{1}{b^2}\left[\frac{1}{2}(y-2)\sin\left(\frac{1-y}{2}\right)+\sin\left(\frac{1-y}{2}\right)\right].
\end{align}
\end{proof}

We now calculate $\phi$ using $\widehat{\phi}$. We first show that the Fourier transform of $\widehat{\phi}$ is the same as the inverse Fourier transform of $\widehat{\phi}$. Let $\mathcal{F}(g)$ denote the Fourier transform of $g$. Since $\widehat{\phi}$ is even, we get
\begin{equation}
    \mathcal{F}(\widehat{\phi}) \ = \ \int_{-\infty}^{\infty} \widehat{\phi}(y)e^{-2\pi ixy}dy \ = \ \int_{\phi}(y)e^{2\pi ixy}dy \ = \ \mathcal{F}^{-1}(\widehat{\phi}).
\end{equation}

Therefore, using Theorem \eqref{convolutionthm}, we get
\begin{equation}
    \phi(x) \ =\  \mathcal{F}^{-1}(\widehat{\phi(x)})\  = \  \mathcal{F}((f \ast f)(y)).
\end{equation}

From the Convolution Theorem (\eqref{convolutionthm}), we get
\begin{equation}
    \mathcal{F}(f \ast f(y)) \ = \ \mathcal{F}(f)F(f) \ = \ \mathcal{F}(f)^2.
\end{equation}

Therefore, we need to compute the Fourier transform of 
\begin{equation}
    f(x) \ = \ \frac{\cos\left(\frac{|x|}{2}-a\right)}{b}
\end{equation}
for $|x| \in [0,1]$. From the definition of Fourier transforms, we get
\begin{align}
    \widehat{f}(y) \ &= \ \int_{-1}^{1} \frac{\cos\left(\frac{|x|}{2}-a\right)}{b}e^{-2\pi ixy}dx \nonumber\\ \ &= \ \int_{-1}^{1} \frac{\cos\left(\frac{|x|}{2}-a\right)}{b}\cos(2\pi xy)dx + i\int_{-1}^{1} \frac{\cos\left(\frac{|x|}{2}-a\right)}{b}\sin(2\pi xy)dx.
\end{align}

Since the right integral is the integral from $-1$ to $1$ of an odd function, it evaluates to $0$. Therefore, we get
\begin{align}
    \widehat{f}(y) \ &= \ \int_{-1}^{1} \frac{\cos\left(\frac{|x|}{2}-a\right)}{b}\cos(2\pi xy)dy \nonumber\\ \ &= \ \frac{16\left(\cos(2\pi x)\sin\left(\frac{1}{2}-a\right)+\sin(a)-4\pi x\cos\left(\frac{1}{2}-a\right)\sin(2\pi x)\right)^2}{(b-16b\pi^2x^2)^2}.
\end{align}

%\begin{figure}[ht]
%    \centering
%    \includegraphics[width=10cm]{phinaivevsoptimal.png}
%    \caption{Comparison of graphs of $\phi_{\rm naive}$ and $\phi_{\rm optimal}$ with $\phi_{\rm optimal}$ in blue and $\phi_{\rm naive}$ in yellow.} For G = SO(even).
%    \label{fig:my_labelfirst}
%\end{figure}

%\begin{figure}[ht]
%    \centering
%    \includegraphics[width=10cm]{phihatnaivevsoptimal.png}
%    \caption{Comparison of graphs of $\widehat{\phi}_{\rm naive}$ and $\widehat{\phi}_{\rm optimal}$ with $\phi_{\rm optimal}$ in blue and $\widehat{\phi}_{\rm naive}$ in yellow.} For G = SO(even).
%    \label{fig:my_labelsecond}
%\end{figure}

Substituting in our values of $a$ and $b$ for the cases of SO(even) and SO(odd), we get
\begin{eqnarray}
    p_r(\mathcal{F}) \ \le \ \frac{0.8645...}{r} \ \ \ {\rm for \ G=SO(even)} \\ p_r(\mathcal{F}) \ \le \ \frac{1.1145...}{r} \ \ \ {\rm for \ G=SO(odd)}.
\end{eqnarray}

We now present the graphs of the optimal and naive test functions and their respective Fourier transforms. As shown in the plots, the two functions have values that are close to each other and thus produce similar bounds. 
\begin{center}
    \begin{tabular}{ |p{3cm}||p{3cm}|p{3cm}|p{3cm}|  }
 \hline
 \multicolumn{3}{|c|}{Comparison of Bounds for the $1$-level density for $G={\rm SO(even)}$} \\
 \hline
 Rank & Bound From naive test function & Bound From optimal test function\\
 \hline
 2 & 0.43750000 & 0.43231300\\
 4 & 0.21875000 & 0.21615700\\
 6 & 0.14583333 & 0.14410400\\
 8 & 0.10937500 & 0.10807800\\
 10 & 0.08750000 & 0.08646260\\
 12 & 0.07291670 & 0.07205220\\
 14 & 0.06250000 & 0.06175900\\
 16 & 0.05468750 & 0.05403910\\
 18 & 0.04861110 & 0.04803848\\
 20 & 0.04375000 & 0.04323130\\
 \hline
\end{tabular}
\end{center}

\begin{center}
    \begin{tabular}{ |p{3cm}||p{3cm}|p{3cm}|p{3cm}|  }
 \hline
 \multicolumn{3}{|c|}{Comparison of Bounds for the $1$-level density for $G=\rm{SO(odd)}$} \\
 \hline
 Rank & Bound From naive test function & Bound From optimal test function\\
 \hline
 1 & 1.12500000 & 1.11454000\\
 3 & 0.37500000 & 0.37151300\\
 5 & 0.22500000 & 0.22908000\\
 7 & 0.16071400 & 0.15922000\\
 9 & 0.12500000 & 0.12383838\\
 11 & 0.10227300 & 0.10132200\\
 13 & 0.08653850 & 0.08573380\\
 15 & 0.07500000 & 0.07430270\\
 17 & 0.06617650 & 0.06556120\\
 19 & 0.05921050 & 0.05866000\\
 21 & 0.05357140 & 0.05307333\\
 \hline
\end{tabular}
\end{center}

As shown in the tables, the improvements in the bounds by the optimal function are marginal, with a difference in the bounds only showing up in the hundredths digit for the non-trivial bounds. Previous work has been done by \cite{BCDMZ,FreemanMiller,ILS} to generate the optimal test functions for the $1$- and $2$-level densities. However, since the gain is so small, we choose to pursue alternate methods rather than optimizing the test functions for higher levels; we choose to look at higher levels to generate better bounds.

%%%%%%%%%%%%%%%%%%%%%%%%%%%%%%%%%%%%%%%%%%%%%%%%%%%%%%%%%%%%%%%%%%%%%%%%%%%%%%%%%%%%%%%%%%%%%%%%%%%%%%%%%%%%%%%%%%%%%%%%%%%%%%%%%
%%%%%%%%%%%%%%%%%%%%%%%%%%%%%%%%%%%%%%%%%%%%%%%%%%%%%%%%%%%%%%%%%%%%%%%%%%%%%%%%%%%%%%%%%%%%%%%%%%%%%%%%%%%%%%%%%%%%%%%%%%%%%%%%%
%%%%%%%%%%%%%%%%%%%%%%%%%%%%%%%%%%%%%%%%%%%%%%%%%%%%%%%%%%%%%%%%%%%%%%%%%%%%%%%%%%%%%%%%%%%%%%%%%%%%%%%%%%%%%%%%%%%%%%%%%%%%%%%%%

%%%%%%%%%%%%%%%%%%%%%%%%%%%%%%%%%%%%%%%%%%%%%%%%%%%%%%%%%%%%%%%%%%%%%%%%%%%%%%%%%%%%%%%%%%%%%%%%%%%%%%%%%%%%%%%%%%%%%%%%%%%%%%%%%
%%%%%%%%%%%%%%%%%%%%%%%%%%%%%%%%%%%%%%%%%%%%%%%%%%%%%%%%%%%%%%%%%%%%%%%%%%%%%%%%%%%%%%%%%%%%%%%%%%%%%%%%%%%%%%%%%%%%%%%%%%%%%%%%%
%%%%%%%%%%%%%%%%%%%%%%%%%%%%%%%%%%%%%%%%%%%%%%%%%%%%%%%%%%%%%%%%%%%%%%%%%%%%%%%%%%%%%%%%%%%%%%%%%%%%%%%%%%%%%%%%%%%%%%%%%%%%%%%%%
\section{Bounds from the $n$-Level Densities ($n$ even) with support $v=2/n$}

Previously, the largest support found was from \cite{HM} which was $v=1/(n-1)$. This has been extended to $v=2/n$ for the $n$-level density by \cite{C--}. The increased support results in a great increase in the complexity of the integrals. For support $v=1/(n-1)$, we only have a one-dimensional integral, which can be easily integrated with methods like Mathematica's NIntegrate, Simpson's Method, or Riemann approximations (we can also do directly through a contour integral, though some work is required as there is a pole of high order on the line of integration). However, the extended support introduces $n$-dimensional integrals and sums over $n$-dimensional integrals which renders these methods unusable or far too computationally intensive. 

In this section, we describe how to obtain bounds using the $n$-level density and support $v_n=2/n$. Additionally, we detail methods to lower the complexity of the calculations, including converting our $n$-dimensional integral to a $1$-dimensional integral, which allows us to use the aforementioned methods.

\begin{thm}\label{2nmainsecond}{Theorem 1.1 from \cite{C--}}
Let $n \geq 2$ and ${\rm supp}(\phi) \subset (-\frac{2}{n}, \frac{2}{n})$. Define
\begin{equation}
    \sigma_{\phi}^2 \ := \ 2\int_{-\infty}^{\infty} |y|\widehat{\phi}(y)^2 dy,
\end{equation}
\begin{multline}
    R(m, i; \phi) \ := \ 2^{m-1}(-1)^{m+1}\sum_{l=0}^{i-1} (-1)^l \binom{m}{l} \\ \left(-\frac{1}{2}\phi^m(0)  +\int_{-\infty}^{\infty} \cdots \int_{-\infty}^{\infty} \widehat{\phi}(x_2) \cdots \widehat{\phi}(x_{l+1})\right. \\ \left. \int_{-\infty}^{\infty} \phi^{m-l}(x_1)\frac{\sin(2\pi x_1(1+|x_2|+\cdots+|x_{l+1}|))}{2\pi x_1}dx_1 \cdots dx_{l+1} \right),
\end{multline},
\begin{equation}
    S(n, a, \phi) \ := \ \sum_{l=0}^{\lfloor\frac{a-1}{2}\rfloor} \frac{n!}{(n-2l)!l!}R(n-2l,a-2l,\phi)\left(\frac{\sigma_{\phi}^2}{2}\right)^l.
\end{equation}
Then
\begin{equation}\label{integralvalue2nsecond}
    \lim_{\substack{N\to\infty \\ N \text{\rm prime}}} \<
\left(D(f;\phi) - \< D(f;\phi) \>_\pm \right)^{n}\>_\pm  \ = \ (n-1)!! \sigma_{\phi}^n1_{\rm even}(n) \pm S(n, a; \phi).
\end{equation}
\end{thm}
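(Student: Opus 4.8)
This is \emph{Theorem 1.1} of \cite{C--}; below I outline how one establishes a statement of this shape. The engine is the Weil explicit formula, which for each $f\in H^\star_k(N)$ rewrites the sum over zeros in $D(f;\phi)$ as a sum over prime powers: up to an error that is $o(1)$ as $N\to\infty$ through the primes,
\begin{equation}
D(f;\phi)\ =\ \widehat\phi(0)\ -\ \frac{2}{\log c_f}\sum_{p}\frac{\lambda_f(p)\log p}{\sqrt p}\,\widehat\phi\!\left(\frac{\log p}{\log c_f}\right)\ -\ \frac{2}{\log c_f}\sum_{p}\frac{\lambda_f(p^2)\log p}{p}\,\widehat\phi\!\left(\frac{2\log p}{\log c_f}\right),
\end{equation}
cubes and higher powers being negligible. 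I would then note that $D(f;\phi)^n=\sum_{\gamma_1,\dots,\gamma_n}\prod_i\phi(\gamma_i\tfrac{\log c_f}{2\pi})$ differs from the $n$-level density only by the lower-level densities attached to the coincidences $\gamma_i=\pm\gamma_j$, so (after a set-partition bookkeeping) computing the $n$-th centered moment is the same as computing the $j$-level densities for $j\le n$; equivalently one may work directly with powers of the prime-sum expansion above. Subtracting $\<D(f;\phi)\>_\pm$, which by the Density Theorem tends to $\int\phi\,W_{\mathcal G}=g_{\mathcal{F}}(\phi)$, leaves a centered variable whose fluctuating part is the $\lambda_f(p)$-sum together with the mean-zero remainder of the $\lambda_f(p^2)$-sum and, crucially, the contribution of the sign-weighting $\tfrac12(1\pm\epsilon_f)$ used to split $H^\star_k(N)$; since for prime level $\epsilon_f$ is a simple multiple of $\lambda_f(N)\sqrt N$, the $\epsilon_f$-weighted averages also reduce to prime sums.

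Next I would raise the centered quantity to the $n$-th power, expand multinomially, and average each resulting monomial over the family with the Petersson (Kuznetsov) formula for $H^\star_k(N)$ in the harmonic-weighted normalization of \cite{ILS}. Using Hecke multiplicativity, in particular $\lambda_f(p)^2=\lambda_f(p^2)+1$, each product of Hecke eigenvalues is first collapsed to a single $\lambda_f(\cdot)$, after which the diagonal $\delta$-term of Petersson produces the main terms while the off-diagonal Bessel--Kloosterman terms are shown to be $o(1)$ \emph{precisely because} $\supp(\widehat\phi)\subset(-\tfrac2n,\tfrac2n)$: any product of at most $n$ of the $\widehat\phi$-factors is then supported where the accompanying product of primes stays below the conductor, killing the exponential sums. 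This is where GRH for Dirichlet $L$-functions enters, exactly as in the hypotheses of the \cite{HM} version.

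To identify the limit I would split the surviving main terms into two types. A monomial built only from $\lambda_f(p)$-factors contributes nothing unless the primes pair off; a matched pair $p_i=p_j$ yields $\tfrac{4}{(\log c_f)^2}\sum_p\tfrac{(\log p)^2}{p}\widehat\phi(\tfrac{\log p}{\log c_f})^2\to 2\int_{-\infty}^{\infty}|y|\widehat\phi(y)^2\,dy=\sigma_\phi^2$, and since an even $n$ admits $(n-1)!!$ perfect matchings (an odd $n$ none), these assemble into the Gaussian term $(n-1)!!\,\sigma_\phi^{n}\mathbf 1_{n\ \mathrm{even}}$. Every configuration that instead spends one of its $n$ slots on the ``$+1$'' from a Hecke square, on a genuine $\lambda_f(p^2)$-factor, or on an $\epsilon_f$-factor produces a lower power of $\sigma_\phi^2$; the $\epsilon_f$-averages create the shifted sine kernels $\tfrac{\sin(2\pi x_1(1+|x_2|+\cdots+|x_{l+1}|))}{2\pi x_1}$ (Fourier transforms of interval indicators, the ``$1$'' coming from the conductor $N$ and the $|x_j|$ from the companion $\widehat\phi$-factors), and the $-\tfrac12\phi^m(0)$ are the boundary pieces. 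Counting how the $n$ slots split into paired and unpaired roles gives the factor $\tfrac{n!}{(n-2l)!\,l!}$ multiplying $(\sigma_\phi^2/2)^l$, so the unpaired part contributes exactly $R(n-2l,a-2l;\phi)$ for each $l$ and the whole sums to $\sum_l\tfrac{n!}{(n-2l)!\,l!}R(n-2l,a-2l;\phi)(\sigma_\phi^2/2)^l=S(n,a;\phi)$, with the overall sign $\pm$ inherited from $\pm\epsilon_f$; here the truncation parameter $a$ is the one the enlarged support $2/n$ permits (it reduces to the single \cite{HM} term when the support is only $1/(n-1)$).

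The hard part is the combinatorial classification in the middle step: one must track exactly which prime-tuple configurations survive Petersson averaging once $\lambda_f(p)^2=\lambda_f(p^2)+1$ is applied repeatedly inside an $n$-fold product, verify that the genuinely off-diagonal contributions remain negligible all the way up to support $2/n$ (this is the advance of \cite{C--} over the $1/(n-1)$ of \cite{HM}), and then pin down the constants $R(m,i;\phi)$ with the $\epsilon_f$-kernels and boundary terms placed correctly; uniform control of the Bessel--Kloosterman error terms as $n$ grows is the remaining technical obstacle.
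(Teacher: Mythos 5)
A point of context first: the paper you are working from does not prove this statement at all --- it is imported verbatim as Theorem 1.1 of \cite{C--} and used as a black-box input, so there is no internal proof to compare against. Judged on its own terms, your sketch correctly reproduces the architecture by which results of this type are established in \cite{ILS}, \cite{HM} and \cite{C--}: the explicit formula converting $D(f;\phi)$ into prime sums (your version drops the $\tfrac12\phi(0)$ term, which is harmless after centering), expansion of the $n$-th centered moment, averaging over $H^\star_k(N)$ with the Petersson formula, repeated use of $\lambda_f(p)^2=\lambda_f(p^2)+1$, the diagonal terms producing the main contribution, the pairing combinatorics giving $(n-1)!!\,\sigma_\phi^n$ for even $n$, and the $\pm$ term traced to the $\epsilon_f$-weighting (via $\lambda_f(N)$ at prime level), which is indeed the source of the shifted sine kernels.

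As a proof, however, the proposal has a genuine gap, and you name it yourself: everything that actually distinguishes Theorem 1.1 of \cite{C--} from its predecessor in \cite{HM} is deferred to ``the hard part'' and never carried out. Concretely, you do not verify that the Bessel--Kloosterman off-diagonal terms remain $o(1)$ all the way up to $\supp(\widehat{\phi})\subset(-\tfrac{2}{n},\tfrac{2}{n})$ rather than $(-\tfrac{1}{n-1},\tfrac{1}{n-1})$, and you do not derive the exact coefficients $\tfrac{n!}{(n-2l)!\,l!}$, the kernels $\sin\bigl(2\pi x_1(1+|x_2|+\cdots+|x_{l+1}|)\bigr)/(2\pi x_1)$, the boundary terms $-\tfrac12\phi^m(0)$ in $R(m,i;\phi)$, or the precise role of the truncation parameter $a$ (which your sketch, like the paper's statement, leaves undetermined). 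Since these items are precisely the content of the cited theorem, what you have is an accurate roadmap of the known proof rather than a proof. If the goal is to use the theorem here, citing \cite{C--} (as the authors do) is the right move; if the goal is to reprove it, the off-diagonal analysis at support $2/n$ and the full bookkeeping of the $\epsilon_f$ and $\lambda_f(p^2)$ configurations must be supplied.
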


\begin{rem}
Since the integrand is an even function in $x_1, x_2, \dots, x_{l+1}$, we can re-write $R$ as 
\begin{multline}
    R(m, i; \phi) \ := \ 2^{m-1}(-1)^{m+1}\sum_{l=0}^{i-1} (-1)^l \binom{m}{l} \\ \left(-\frac{1}{2}\phi^m(0)  + 2^{l+1}\int_{0}^{\infty} \cdots \int_{0}^{\infty} \widehat{\phi}(x_2) \cdots \widehat{\phi}(x_{l+1}) \right. \\ \left. \int_{0}^{\infty} \phi^{m-l}(x_1)\frac{\sin(2\pi x_1(1+|x_2|+\cdots+|x_l+1|))}{2\pi x_1}dx_1 \cdots dx_{l+1} \right).
\end{multline}
This decreases the computation by a factor of $1/2^{l+1}$ when computing the integral. 
\end{rem}

\begin{rem}
We can rewrite \eqref{integralvalue2nsecond} as 
\begin{equation}\label{maincenteredmomentexpansionequation}
\lim_{\substack{N\to\infty \\ N \text{\rm prime}}}\frac{1}{|\mathcal{F}_N|}\sum_{f \in \mathcal{F}_N} \left(\sum_{j} \phi\left(\gamma_{f,j}c_n\right)-\mu(\phi, \mathcal{F}_N)\right)^n \ = \ 1_{n \ \rm even}(n-1)!! \sigma_{\phi}^n \pm S(n, a; \phi)
\end{equation}
for $\phi$ an even Schwartz function with ${\rm supp}(\hphi) \subset (-\frac{2}{n}, \frac{2}{n})$ and even $n=2m$.
\end{rem}

\begin{cor}\label{maincenteredmomentexpensioninequality}
For $\phi$ an even Schwartz function with ${\rm supp}(\hphi) \subset (-\frac{1}{n-1}, \frac{1}{n-1})$ and even $n=2m$, we have 
\begin{equation}\label{maincenteredmomentexpansionequationsecond}
\lim_{\substack{N\to\infty \\ N \text{\rm prime}}}\frac{1}{|\mathcal{F}_N|}\sum_{f \in \mathcal{F}_N} \left(\sum_{j} \phi\left(\gamma_{f,j}c_n\right)-\mu(\phi, \mathcal{F})\right)^n \ = \ 1_{n \ \rm even}(n-1)!! \sigma_{\phi}^n \pm S(n, a; \phi).
\end{equation}
\end{cor}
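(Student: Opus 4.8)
\emph{Proof strategy.} The plan is to deduce Corollary~\ref{maincenteredmomentexpensioninequality} directly from Theorem~\ref{2nmainsecond}; no new analytic input is required, only the observation that the narrower support hypothesis is a special case of the one already handled. The first step is the elementary inequality
\[
\frac{1}{n-1}\ \le\ \frac{2}{n}\qquad\text{for every integer }n\ge 2,
\]
which follows on cross-multiplying, since $\tfrac{1}{n-1}\le\tfrac{2}{n}$ is equivalent to $n\le 2(n-1)$, i.e.\ to $n\ge 2$ (with equality precisely when $n=2$). Consequently $\left(-\tfrac{1}{n-1},\tfrac{1}{n-1}\right)\subseteq\left(-\tfrac{2}{n},\tfrac{2}{n}\right)$.

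Next, let $\phi$ be an even Schwartz function with $\supp(\hphi)\subset\left(-\tfrac{1}{n-1},\tfrac{1}{n-1}\right)$ and $n=2m$ even. By the containment just established, $\supp(\hphi)\subset\left(-\tfrac{2}{n},\tfrac{2}{n}\right)$, so $\phi$ meets the hypotheses of Theorem~\ref{2nmainsecond}. That theorem gives \eqref{integralvalue2nsecond}, and rewriting its conclusion in the centered-moment form already recorded in \eqref{maincenteredmomentexpansionequation} produces exactly \eqref{maincenteredmomentexpansionequationsecond}. The one point deserving care is that the parameter $a$ entering $S(n,a;\phi)$ must be the one attached to the support of $\hphi$, so that $S(n,a;\phi)$ denotes the same quantity in both statements; this is bookkeeping rather than mathematics, since restricting to a smaller class of test functions cannot break an identity that already holds on the larger class.

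Although the corollary is thus logically immediate, I would close with a remark on why it is worth isolating separately. On the smaller support $\tfrac{1}{n-1}$ many of the multidimensional integrals appearing in $R(m,i;\phi)$ vanish identically, because the support constraint leaves no room for the nontrivial configurations, so that the expression collapses onto a single one-dimensional integral and one recovers the simpler Hughes--Miller shape of $R_n(\phi)$; this is precisely why, as noted earlier, the bounds coming from the restricted support are computationally much cheaper. Verifying this collapse is the only step here with any genuine content, and even it is not needed to prove the corollary itself. In short, I expect no real obstacle: the statement is a restriction of Theorem~\ref{2nmainsecond}, not an extension of it, and the sole (mild) check is that the two support regimes yield literally the same right-hand side once the $a$-dependence is pinned down.
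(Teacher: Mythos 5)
Your proposal treats the corollary as a pure restriction of Theorem \ref{2nmainsecond} to a smaller class of test functions, but that misses its actual content. The quantity being subtracted inside the $n$-th power changes between the two statements: Theorem \ref{2nmainsecond} (and the rewriting in \eqref{maincenteredmomentexpansionequation}) centers at the family average for finite level, $\langle D(f;\phi)\rangle_\pm$, i.e.\ at $\mu(\phi,\mathcal{F}_N)$, whereas the corollary centers at the limiting mean $\mu(\phi,\mathcal{F})$. Your argument --- the containment $\bigl(-\tfrac{1}{n-1},\tfrac{1}{n-1}\bigr)\subseteq\bigl(-\tfrac{2}{n},\tfrac{2}{n}\bigr)$ plus ``rewrite the conclusion'' --- only yields the identity with $\mu(\phi,\mathcal{F}_N)$ as the centering constant; it never justifies swapping in the $N$-independent constant $\mu(\phi,\mathcal{F})$ inside an $n$-th power and then taking the limit. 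That swap is precisely what the paper's proof is about, and it is not ``bookkeeping'': even though $\mu(\phi,\mathcal{F}_N)-\mu(\phi,\mathcal{F})\to 0$, the difference is multiplied against powers of the centered sum $\sum_j\phi(\gamma_{f,j}c_n)-\mu(\phi,\mathcal{F}_N)$, which is not bounded uniformly in $f$, so one cannot simply pass the limit through term by term.

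The missing step is the following: write $\sum_j\phi(\gamma_{f,j}c_n)-\mu(\phi,\mathcal{F}) = \bigl(\sum_j\phi(\gamma_{f,j}c_n)-\mu(\phi,\mathcal{F}_N)\bigr)+\bigl(\mu(\phi,\mathcal{F}_N)-\mu(\phi,\mathcal{F})\bigr)$, expand the $n$-th power by the Binomial Theorem, and show every term except the one with no difference factor vanishes in the limit. The paper does this with Cauchy--Schwarz: the averaged $k$-th centered moments (with even exponent) are $O(1)$ by Theorem \ref{2nmainsecond}, while the factor $\bigl(\mu(\phi,\mathcal{F}_N)-\mu(\phi,\mathcal{F})\bigr)^{n-k}$ tends to zero, so the cross terms die and only the leading term survives, giving \eqref{maincenteredmomentexpansionequationsecond}. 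Your observation that $\tfrac{1}{n-1}\le\tfrac{2}{n}$ is correct and does handle the support hypothesis, and your closing remark about integrals collapsing on the smaller support is true but irrelevant to the proof; without the add-and-subtract/Cauchy--Schwarz argument, however, the proposal does not prove the stated corollary.
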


\begin{proof}
From \eqref{maincenteredmomentexpansionequationsecond}, we get
\begin{equation}\label{maincenteredmomentexpansionequation2}
\lim_{\substack{N\to\infty \\ N \text{\rm prime}}}\frac{1}{|\mathcal{F}_N|}\sum_{f \in \mathcal{F}_N} \left(\sum_{j} \phi\left(\gamma_{f,j}c_n\right)-\mu(\phi, \mathcal{F}_N)\right)^n \ = \ 1_{n \ \rm even}(n-1)!! \sigma_{\phi}^n \pm S(n, a; \phi).
\end{equation}
We now consider the following term, $T$, and show that it equals the LHS of \eqref{maincenteredmomentexpansionequation2} 
\begin{equation}
T \ = \ \lim_{\substack{N\to\infty \\ N \text{\rm prime}}}\frac{1}{|\mathcal{F}_N|}\sum_{f \in \mathcal{F}_N} \left(\sum_{j} \phi\left(\gamma_{f,j}c_n\right) -\mu(\phi, \mathcal{F}_N) + \mu(\phi, \mathcal{F}_N) - \mu(\phi, \mathcal{F})\right)^n .
  \end{equation}

Applying the Binomial Theorem, we get
\begin{multline}
T \ = \ \lim_{\substack{N\to\infty \\ N \text{\rm prime}}}\frac{1}{|\mathcal{F}_N|}\sum_{f \in \mathcal{F}_N} \sum_{k=0}^n \binom{n}{k}\left(\sum_{j} \phi(\gamma_{f,j}c_n)-\mu(\phi, \mathcal{F}_N)\right)^k\left(\mu(\phi, \mathcal{F}_N)-\mu(\phi, \mathcal{F})\right)^{n-k}.
\end{multline}
Since $\lim_{\substack{N\to\infty \\ N \text{\rm prime}}} \left(\mu(\phi, \mathcal{F}) - \mu(\phi, \mathcal{F}_N)\right) \ = \ 0$, we would like to say each term vanishes in the limit except the $k=0$ term. Unfortunately more care is needed, as it is possible that the two terms in each product could reinforce each other. A standard application of the Cauchy-Schwarz inequality however suffices, We now have the square root of two sums. The first involves $n$-level densities to even arguments, and is $O(1)$, while the second involves our differences and thus tends to zero. Thus the only term that contributes in the limit is the first, and
\begin{multline}
    \lim_{\substack{N\to\infty \\ N \text{\rm prime}}}\frac{1}{|\mathcal{F}_N|}\sum_{f \in \mathcal{F}_N} \left(\sum_{j} \phi\left(\gamma_{f,j}c_n\right)-\mu(\phi, \mathcal{F}_N)\right)^n \\ = \lim_{\substack{N\to\infty \\ N \text{\rm prime}}}\frac{1}{|\mathcal{F}_N|}\sum_{f \in \mathcal{F}_N} \left(\sum_{j} \phi\left(\gamma_{f,j}c_n\right)  - \mu(\phi, \mathcal{F})\right)^n.\
\end{multline}

Substituting this back into our original equation, we get
\begin{equation}
    \lim_{\substack{N\to\infty \\ N \text{\rm prime}}}\frac{1}{|\mathcal{F}_N|}\sum_{f \in \mathcal{F}_N} \left(\sum_{j} \phi\left(\gamma_{f,j}c_n\right)  - \mu(\phi, \mathcal{F})\right)^n \ = \ 1_{n \ \rm even}(n-1)!! \sigma_{\phi}^n \pm S(n, a; \phi).
\end{equation}
\end{proof}

We use the following from \cite{ILS}.

\begin{thm}\label{meanphiF}
For a given test function $\phi$ where ${\rm supp}(\hphi) \subset (-1,1)$, the main term of the mean of the 1-level density of $\mathcal{F}_N$ is
\begin{equation}\label{meanphiFequation}
\mu(\phi, \mathcal{F}) \ := \ \widehat{\phi}(0)+\frac{1}{2}\int_{-1}^{1} \widehat{\phi}(y)dy.
\end{equation}
\end{thm}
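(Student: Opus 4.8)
The plan is to obtain the formula from Weil's explicit formula applied to $L(s,f)$ together with an average over the family via the Petersson trace formula, following \cite{ILS}; since only the \emph{main term} of the mean is asserted, it suffices to track the surviving contributions and defer the error estimates to that reference.

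First I would apply the explicit formula to the completed $L$-function $\Lambda(s,f)$ of \eqref{eq:completed_L_func}. Shifting contours on the logarithmic derivative $-\Lambda'/\Lambda(s,f)$ integrated against $\phi$, and writing $L := \log c_f$, one gets a decomposition of the form
\begin{equation}
D(f,\phi) \ = \ \widehat{\phi}(0) \ + \ \mathrm{(archimedean/conductor\ terms)} \ - \ \frac{2}{L}\sum_{p,\,\nu\geq 1}\frac{\log p}{p^{\nu/2}}\,(\alpha_p^\nu+\beta_p^\nu)\,\widehat{\phi}\!\left(\frac{\nu\log p}{L}\right),
\end{equation}
where $\alpha_p+\beta_p=\lambda_f(p)$, $\alpha_p\beta_p=1$ are the Satake parameters. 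For fixed weight $k$ the archimedean terms coming from $\Gamma(s+\tfrac{k-1}{2})$ and the conductor contribute exactly $\widehat{\phi}(0)+O(1/L)$ as $N\to\infty$, since $L\asymp\log N\to\infty$; and the prime powers with $\nu\geq 3$ contribute $O(1/L)$. Thus only $\nu=1$ and $\nu=2$ are relevant to the main term.

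Next I would average over $f\in\mathcal{F}_N=H^\star_k(N)$ (for prime $N$ the oldform space is finite-dimensional and negligible). Using the Petersson trace formula to evaluate $\langle\lambda_f(p)\rangle$ and $\langle\lambda_f(p^2)\rangle$: the $\nu=1$ sum has no diagonal term, and its off-diagonal (Kloosterman/Bessel) contribution is negligible provided $\mathrm{supp}(\widehat{\phi})\subset(-1,1)$, so that the relevant primes satisfy $p\ll c_f\ll N$; hence the $\nu=1$ term averages to $o(1)$, and in particular the $\pm$ piece that would distinguish SO(even) from SO(odd) cancels for the unsplit orthogonal family. For $\nu=2$, the relation $\alpha_p^2+\beta_p^2=\lambda_f(p)^2-2=\lambda_f(p^2)-1$ together with $\langle\lambda_f(p^2)\rangle=o(1)$ in the same support range gives $\langle\alpha_p^2+\beta_p^2\rangle=-1+o(1)$, so that
\begin{equation}
-\frac{2}{L}\sum_p\frac{\log p}{p}\,\langle\alpha_p^2+\beta_p^2\rangle\,\widehat{\phi}\!\left(\frac{2\log p}{L}\right) \ = \ \frac{2}{L}\sum_p\frac{\log p}{p}\,\widehat{\phi}\!\left(\frac{2\log p}{L}\right) + o(1),
\end{equation}
and by partial summation with the prime number theorem this tends to $\tfrac12\int_{-\infty}^{\infty}\widehat{\phi}(u)\,du=\tfrac12\int_{-1}^{1}\widehat{\phi}(u)\,du$ since $\mathrm{supp}(\widehat{\phi})\subset(-1,1)$. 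Collecting the surviving pieces yields $\mu(\phi,\mathcal{F})=\widehat{\phi}(0)+\tfrac12\int_{-1}^{1}\widehat{\phi}(y)\,dy$.

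The main obstacle is the control of the off-diagonal Kloosterman-sum terms in the Petersson formula and the removal of the harmonic weights implicit in it; this is precisely the delicate analysis of \cite{ILS}, and it is what constrains the admissible support to $(-1,1)$ for this clean statement (the range extends to $(-2,2)$ with substantially more work). For the present theorem, which only records the shape of the main term, I would invoke those estimates directly rather than reproving them. A minor bookkeeping point is to confirm that, because the family is the full orthogonal ensemble and not split by sign, the sign-dependent $\nu=1$ contribution genuinely averages away, leaving only the two terms above.
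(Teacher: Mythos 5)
Your proposal is essentially the paper's route: the paper states this result without proof, importing it directly from \cite{ILS}, and your sketch is exactly the standard ILS argument (explicit formula, Petersson averaging, with the $-1$ in $\alpha_p^2+\beta_p^2=\lambda_f(p^2)-1$ producing the $\tfrac{1}{2}\int_{-1}^{1}\widehat{\phi}(y)\,dy$ term and the conductor giving $\widehat{\phi}(0)$), with the off-diagonal Kloosterman estimates correctly deferred to that reference. The one caveat is that the families here are split by sign rather than the full orthogonal ensemble, but for ${\rm supp}(\widehat{\phi})\subset(-1,1)$ the sign-dependent ($\epsilon_f$-weighted) sums in \cite{ILS} are negligible, so the mean is the same for SO(even) and SO(odd) and your conclusion stands.
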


\begin{thm}

For an even $n$ with $r \geq \mu(\phi, \mathcal{F})/\phi(0)$,
\begin{equation}
p_r(\mathcal{F}) \ \le \ \frac{(n-1)!!\sigma_{\phi}^{n} \pm S(n,\frac{n}{2}; \phi)}{(r\phi(0)-\mu(\phi, \mathcal{F}))^n}.
\end{equation}
 
\end{thm}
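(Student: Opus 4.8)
The plan is to mimic the argument given in the commented-out proof of Theorem~\ref{thm:onelevelboundpr}, but applied to the $n$-th centered moment rather than the $1$-level density itself. Start from the expansion in Corollary~\ref{maincenteredmomentexpensioninequality} (equivalently \eqref{maincenteredmomentexpansionequationsecond}), which identifies the limit of
\[
\frac{1}{|\mathcal{F}_N|}\sum_{f \in \mathcal{F}_N}\left(\sum_j \phi(\gamma_{f,j}c_n) - \mu(\phi,\mathcal{F})\right)^n
\]
with $(n-1)!!\,\sigma_\phi^n \pm S(n,\tfrac n2;\phi)$ when $n$ is even. The key observation is that each summand on the left is \emph{non-negative} when $n$ is even, since it is an $n$-th power of a real number with $n$ even. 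This is exactly what lets us throw away all but one form, the same way the original proof throws away all but the $j = r$ term.

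The steps, in order: (1) For a form $f \in \mathcal{F}_{N,r}$ vanishing to exact order $r$ at the central point, $\sum_j \phi(\gamma_{f,j}c_n) \geq r\phi(0)$, because $\phi$ is non-negative and there are at least $r$ zeros (counted with multiplicity) at $\gamma_f = 0$, each contributing $\phi(0)$. (2) Hence, provided $r\phi(0) - \mu(\phi,\mathcal{F}) \geq 0$ (this is the hypothesis $r \geq \mu(\phi,\mathcal{F})/\phi(0)$), the inner quantity $\sum_j \phi(\gamma_{f,j}c_n) - \mu(\phi,\mathcal{F})$ is at least $r\phi(0) - \mu(\phi,\mathcal{F}) \geq 0$, so its $n$-th power is at least $(r\phi(0) - \mu(\phi,\mathcal{F}))^n$. (3) Restricting the sum over $f \in \mathcal{F}_N$ to $f \in \mathcal{F}_{N,r}$ and using non-negativity of the discarded terms,
\[
\frac{1}{|\mathcal{F}_N|}\sum_{f \in \mathcal{F}_N}\left(\sum_j \phi(\gamma_{f,j}c_n) - \mu(\phi,\mathcal{F})\right)^n \ \geq\ \frac{|\mathcal{F}_{N,r}|}{|\mathcal{F}_N|}\,(r\phi(0)-\mu(\phi,\mathcal{F}))^n \ =\ p_r(\mathcal{F}_N)\,(r\phi(0)-\mu(\phi,\mathcal{F}))^n.
\]
(4) Take $N \to \infty$ through the primes: the left side tends to $(n-1)!!\,\sigma_\phi^n \pm S(n,\tfrac n2;\phi)$ by Theorem~\ref{2nmainsecond}, and the right side tends to $p_r(\mathcal{F})(r\phi(0)-\mu(\phi,\mathcal{F}))^n$. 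Dividing by $(r\phi(0)-\mu(\phi,\mathcal{F}))^n$ (which is positive, hence nonzero) gives the claimed bound.

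The main subtlety — and the thing to be careful about — is the sign/positivity bookkeeping: one must verify that $r\phi(0) - \mu(\phi,\mathcal{F})$ is genuinely $\geq 0$ under the stated hypothesis (so that raising to the $n$-th power preserves the inequality direction and so the final division is by a positive quantity), and that $n$ being even is what makes every discarded summand non-negative. A secondary point is that the ``$\pm$'' in $S(n,\tfrac n2;\phi)$ must be interpreted as matching the sign convention of the family ($+$ for SO(even), $-$ for SO(odd)); since $p_r$ is being bounded above, we keep the expression $(n-1)!!\sigma_\phi^n \pm S(n,\tfrac n2;\phi)$ as written, which is the correct upper side for each family. No hard analysis is needed beyond invoking Theorem~\ref{2nmainsecond}; the argument is essentially a one-term truncation of a non-negative sum followed by a limit.
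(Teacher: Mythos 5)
Your proposal is correct and follows essentially the same route as the paper's proof: restrict to $\mathcal{F}_{N,r}$ using that even powers make the discarded terms non-negative, lower-bound each remaining summand by $(r\phi(0)-\mu(\phi,\mathcal{F}))^n$ using $\phi\ge 0$ together with the hypothesis $r\ge \mu(\phi,\mathcal{F})/\phi(0)$ (the paper phrases this as dropping the non-central contribution $B_f(\phi)$), then take the limit and divide. No substantive differences.
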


\begin{proof}
For even-level densities, the contribution to the sum in Corollary \eqref{maincenteredmomentexpensioninequality} by forms in which there are not $r$ zeroes at the central point is positive (as $n$ is even), so removing them cannot increase the sum. Therefore, from Corollary \eqref{maincenteredmomentexpensioninequality}, we have
\begin{equation}
\lim_{\substack{N\to\infty \\ N \text{\rm prime}}} \frac{1}{|\mathcal{F}_N|}\sum_{f \in \mathcal{F}_{N,r}} \left(r\phi(0) + B_f(\phi) -\mu(\phi, \mathcal{F})\right)^n \ \le \  1_{n \ \rm even}(n-1)!! \sigma_{\phi}^n \pm S(n, a; \phi).
\end{equation}

We have $(r \phi(0) + B_f(\phi) - \mu(\phi, \mathcal{F}_N))^n$; we would like to say dropping $B_f(\phi)$ cannot increase the sum, but if the first two terms are less than the third, this is not the case. By our assumption on $r$, however, we see the sum with and without $B_f(\phi)$ is positive and thus dropping it leads to an upper bound:
\begin{equation}
\lim_{\substack{N\to\infty \\ N \text{\rm prime}}} \frac{1}{|\mathcal{F}_N|}\sum_{f \in \mathcal{F}_{N,r}} \left(r\phi(0)-\mu(\phi, \mathcal{F})\right)^n \ \le \ 1_{n \ \rm even}(n-1)!! \sigma_{\phi}^n \pm S(n, a; \phi).
\end{equation}

Since the quantity $r\phi(0)-\mu(\phi, \mathcal{F}_N)$ is not dependent on $f$, we can separate it from the summation:
\begin{equation}
\left(r\phi(0)-\mu(\phi, \mathcal{F})\right)^n \lim_{\substack{N\to\infty \\ N \text{\rm prime}}}\frac{1}{|\mathcal{F}_N|}\sum_{f \in \mathcal{F}_{N,r}} 1 \  \le \ 1_{n \ \rm even}(n-1)!! \sigma_{\phi}^n \pm S(n, a; \phi).
\end{equation} 

Since $\sum_{f \in \mathcal{F}_{N, r}} 1 = |\mathcal{F}_{N, r}|$, we get
\begin{equation}
\left(r\phi(0)-\mu(\phi, \mathcal{F}_N)\right)^n \lim_{\substack{N\to\infty \\ N \text{\rm prime}}}\frac{|\mathcal{F}_{N,r}|}{|\mathcal{F}_N|} \  \le \ 1_{n \ \rm even}(n-1)!! \sigma_{\phi}^n \pm S(n, a; \phi).
\end{equation}

Using the definition for the percent that vanish to exact order $r$ from \eqref{pFN}, we get
\begin{equation}
\lim_{\substack{N\to\infty \\ N \text{\rm prime}}} p_r(\mathcal{F}_N) \ \le \ \frac{(n-1)!!\sigma_{\phi}^{n} \pm R_{n}(\phi)}{(r\phi(0)-\mu(\phi, \mathcal{F}_N))^n}.
\end{equation}

Taking the limit as $N \to \infty$, we get
\begin{equation}
p_r(\mathcal{F}) \ \le \ \frac{1_{n \ \rm even}(n-1)!! \sigma_{\phi}^n \pm S(n, a; \phi)}{(r\phi(0)-\mu(\phi, \mathcal{F}_N))^n}.
\end{equation}
\end{proof}
\begin{comment}

We first calculate a version of $R(m, i; \phi)$ using Fourier transforms. From the definition of $R(m, i; \phi)$, we have
\begin{multline}
    R(m, i; \phi) \ := \ 2^{m-1}(-1)^{m+1}\sum_{l=0}^{i-1} (-1)^l \binom{m}{l} \\ \left(-\frac{1}{2}\phi^m(0)  +\int_{-\infty}^{\infty} \cdots \int_{-\infty}^{\infty} \widehat{\phi}(x_2) \cdots \widehat{\phi}(x_{l+1})\int_{-\infty}^{\infty} \phi^{m-l}(x_1)\frac{\sin(2\pi x_1(1+|x_2|+\cdots+|x_l+1|))}{2\pi x_1}dx_1 \cdots dx_{l+1} \right)
\end{multline}
Dividing by $(1+|x_2|+\cdots+|x_l+1|)$ inside the integral, we get
\begin{multline}
    R(m, i; \phi) \ := \ 2^{m-1}(-1)^{m+1}\sum_{l=0}^{i-1} (-1)^l \binom{m}{l} \\ \left(-\frac{1}{2}\phi^m(0)  +\int_{-\infty}^{\infty} \cdots \int_{-\infty}^{\infty} \widehat{\phi}(x_2) \cdots \widehat{\phi}(x_{l+1})(1+|x_2|+\cdots+|x_l+1|) \\ \int_{-\infty}^{\infty} \phi^{m-l}(x_1)\frac{\sin(2\pi x_1(1+|x_2|+\cdots+|x_l+1|))}{2\pi x_1(x_{l+1})(1+|x_2|+\cdots+|x_l+1|)}dx_1 \cdots dx_{l+1} \right)
\end{multline}
Since the Fourier transform of $\sin(2\pi z)/(2\pi z)$ is $\eta(z)/2$ where $\eta(z)=1$ if $|y|<1$, $1/2$ if $y=\pm 1$, and $0$ if $|y|>1$, we get
\begin{multline}
    R(m, i; \phi) \ := \ 2^{m-1}(-1)^{m+1}\sum_{l=0}^{i-1} (-1)^l \binom{m}{l} \\ \left(-\frac{1}{2}\phi^m(0)  +\int_{-\infty}^{\infty} \cdots \int_{-\infty}^{\infty} \widehat{\phi}(x_2) \cdots \widehat{\phi}(x_{l+1})(1+|x_2|+\cdots+|x_l+1|) \\ \int_{-\infty}^{\infty} \widehat{\phi^{m-l}(x_1)}\eta(1+|x+2|+\cdots+|x_l+1|)dx_1 \cdots dx_{l+1} \right)
\end{multline}
\end{comment}

For quicker computation, we can convert the multi-dimensional $R$ integral to a one-dimensional integral. 
\begin{lem}
Let 
\begin{equation}
    I_1(x, v) \ := \ \frac{2i \pi vx - e^{2i \pi vx} + 1}{2\pi^2 v^2 x^2} \ \ \ , \ \ \  I_2(x,v) \ := \ \frac{1-e^{-2i \pi vx}-2i\pi vx}{2\pi^2v^2x^2} = I_1(x,v). 
\end{equation}
\begin{multline}
    R(m, i; \phi) \ = \ 2^{m-1}(-1)^{m+1}\sum_{l=0}^{i-1}(-1)^l\binom{m}{l} \\ \left(-\frac{1}{2}\phi^m(0) + \int_{-\infty}^{\infty} \phi^{m-l}(x)\frac{I_1(x)^le^{2\pi ix}-I_2(x)^le^{-2\pi ix}}{4\pi ix}\right).
\end{multline}
\end{lem}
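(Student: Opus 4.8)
The plan is to collapse the $l$ identical inner integrals over $x_2,\dots,x_{l+1}$ appearing in the definition of $R(m,i;\phi)$ in Theorem \ref{2nmainsecond} into a single $l$-th power, leaving a genuinely one-dimensional integral over $x_1$. The mechanism is to rewrite the coupling factor $\sin(2\pi x_1(1+|x_2|+\cdots+|x_{l+1}|))$ in terms of exponentials, so that the argument of each exponential factors as a product over the separate variables.

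Concretely, set $s:=1+|x_2|+\cdots+|x_{l+1}|$ and use
\begin{equation*}
\frac{\sin(2\pi x_1 s)}{2\pi x_1}\ =\ \frac{e^{2\pi i x_1 s}-e^{-2\pi i x_1 s}}{4\pi i x_1},\qquad e^{\pm 2\pi i x_1 s}\ =\ e^{\pm 2\pi i x_1}\prod_{j=2}^{l+1}e^{\pm 2\pi i x_1|x_j|}.
\end{equation*}
Since $\phi$ is Schwartz and $\widehat\phi$ is compactly supported (hence $\widehat\phi\in L^1$), while $\sin(2\pi x_1 s)/(2\pi x_1)$ is bounded near $x_1=0$ and the rapidly decaying factor $\phi^{m-l}(x_1)$ (with $m>l$ in every term that occurs) controls large $x_1$, the $(l+1)$-fold integrand is absolutely integrable. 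Fubini's Theorem then permits integrating out $x_2,\dots,x_{l+1}$ for each fixed $x_1$; because the integrand has split into a product of single-variable functions, this integral becomes $\big(\int_{\mathbb R}\widehat\phi(y)e^{\pm 2\pi i x_1|y|}\,dy\big)^{l}$. Writing $I_1(x_1):=\int_{\mathbb R}\widehat\phi(y)e^{2\pi i x_1|y|}\,dy$ and $I_2(x_1):=\int_{\mathbb R}\widehat\phi(y)e^{-2\pi i x_1|y|}\,dy=\overline{I_1(x_1)}$, the multidimensional term of $R$ collapses to $\int_{\mathbb R}\phi^{m-l}(x_1)\,\dfrac{I_1(x_1)^l e^{2\pi i x_1}-I_2(x_1)^l e^{-2\pi i x_1}}{4\pi i x_1}\,dx_1$, with the convention $I_j^0\equiv 1$ handling the $l=0$ term (which recovers $\sin(2\pi x_1)/(2\pi x_1)$). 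Substituting this back into the defining sum over $l$ yields the claimed identity.

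It then remains to evaluate $I_1,I_2$ explicitly for the naive test function, i.e. $\widehat\phi(y)=\tfrac1v\big(1-\tfrac{|y|}{v}\big)$ on $(-v,v)$ with $v=v_n=2/n$ (so $I_1(x)$ abbreviates $I_1(x,v_n)$). By evenness $I_1(x,v)=\tfrac2v\int_0^v\big(1-\tfrac{y}{v}\big)e^{2\pi i x y}\,dy$, and two elementary integrations by parts (equivalently, differentiating $\int_0^v e^{uy}\,dy$ in the parameter $u=2\pi i x$) give $I_1(x,v)=\dfrac{2\pi i v x-e^{2\pi i v x}+1}{2\pi^2 v^2 x^2}$; replacing $x$ by $-x$ gives $I_2(x,v)=\dfrac{1-e^{-2\pi i v x}-2\pi i v x}{2\pi^2 v^2 x^2}$. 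A Taylor expansion at $x=0$ shows $I_1(0,v)=I_2(0,v)=1=\phi(0)$, so the numerator $I_1^l e^{2\pi i x}-I_2^l e^{-2\pi i x}$ vanishes at $x=0$ and the one-dimensional integrand extends continuously across the origin, so the integral is proper.

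The only genuine obstacle is the justification of the interchange of the order of integration and the separation of the product of exponentials, which amounts to verifying the absolute integrability of the $(l+1)$-dimensional integrand; once Fubini's Theorem is in force, the collapse to an $l$-th power and the reassembly into $R(m,i;\phi)$ are purely bookkeeping, and the closed forms for $I_1$ and $I_2$ are a short direct computation.
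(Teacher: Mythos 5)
Your proposal is correct and follows essentially the same route as the paper: rewrite the sine factor in exponential form, justify Fubini by absolute integrability, integrate out $x_2,\dots,x_{l+1}$ so each variable contributes a factor $\int\widehat{\phi}(y)e^{\pm 2\pi i x_1|y|}\,dy$, and then evaluate $I_1,I_2$ in closed form for the naive test function. Your added observations --- that the explicit formulas for $I_1,I_2$ presuppose $\widehat{\phi}(y)=\tfrac1v(1-|y|/v)$, and that $I_1(0,v)=I_2(0,v)=1$ so the one-dimensional integrand is continuous at $x=0$ --- are accurate refinements of what the paper leaves implicit.
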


\begin{proof}
Let
\begin{multline}
    T \ := \ \int_{-\infty}^{\infty} \cdots \int_{-\infty}^{\infty} \widehat{\phi}(x_2) \cdots \widehat{\phi}(x_{l+1}) \\ \int_{-\infty}^{\infty} \phi^{m-l}(x_1) \frac{\sin(2\pi x_1(1+|x_2|+\cdots+|x_{l+1}|))}{2\pi x_1}dx_1. \cdots dx_{l+1}.
\end{multline}
Then, we get
\begin{equation}
    R(m, i; \phi) \ = \ 2^{m-1}(-1)^{m+1}\sum_{l=0}^{i-1}(-1)^l\binom{m}{l}\left(-\frac{1}{2}\phi^m(0) + T\right).
\end{equation}
We reduce $T$ to a 1-dimensional integral. To do so, we first show that the conditions for Fubini's Theorem hold which allows us to switch the order of integration. Therefore, we show that integral of the absolute value converges. Let 
\begin{multline}
    T'\ := \ \int_{-\infty}^{\infty} \cdots \int_{-\infty}^{\infty} \widehat{\phi}(x_2) \cdots \widehat{\phi}(x_{l+1}) \\ \int_{-\infty}^{\infty} \phi^{m-l}(x_1)\left|\frac{\sin(2\pi x_1(1+|x_2|+\cdots+|x_{l+1}|))}{2\pi x_1}\right|dx_1 \cdots dx_{l+1}.
\end{multline}
Since $\phi$ and $\widehat{\phi}$ are non-negative and the absolute value is non-negative, the value inside the integrand is non-negative. Multiplying by $(1+|x_2|+\cdots+|x_{l+1})/(1+|x_2|+\cdots+|x_{l+1}|)$, we get
\begin{multline}
    T' \ = \ \int_{-\infty}^{\infty} \cdots \int_{-\infty}^{\infty} \widehat{\phi}(x_2) \cdots \widehat{\phi}(x_{l+1})(1+|x_2|+\cdots+|x_{l+1}|)\\ \int_{-\infty}^{\infty} \phi^{m-l}(x_1)\left|\frac{\sin(2\pi x_1(1+|x_2|+\cdots+|x_{l+1}|))}{2\pi x_1(1+|x_2|+\cdots+|x_{l+1}|)}\right|dx_1 \cdots dx_{l+1}.
\end{multline}
Since $|\sin(u)/u| \le 1$ for all $u$, we get
\begin{equation}
    T' \ \le \ \int_{-\infty}^{\infty} \cdots \int_{-\infty}^{\infty} \widehat{\phi}(x_2) \cdots \widehat{\phi}(x_{l+1})(1+|x_2|+\cdots+|x_{l+1}|)\int_{-\infty}^{\infty} \phi^{m-l}(x_1)dx_1 \cdots dx_{l+1}.
\end{equation}
Since $\widehat{\phi}$ decays faster than any polynomial, the integral converges so Fubini's Theorem holds. Therefore, we can apply Fubini's Theorem to get
\begin{multline}\label{fubini}
    T \ = \ \int_{-\infty}^{\infty} \cdots \int_{-\infty}^{\infty} \widehat{\phi}(x_2) \cdots \widehat{\phi}(x_{l+1}) \\  \int_{-\infty}^{\infty}  \phi^{m-l}(x_1)\frac{\sin(2\pi x_1(1+|x_2|+\cdots+|x_{l+1}|))}{2\pi x_1}dx_1 \cdots dx_{l+1}. 
\end{multline}
Converting sine to exponential form, we get
\begin{multline}
    T \ = \ \int_{-\infty}^{\infty} \cdots \int_{-\infty}^{\infty} \widehat{\phi}(x_2) \cdots \widehat{\phi}(x_{l+1}) \\ \int_{-\infty}^{\infty} \phi^{m-l}(x+1)\frac{e^{2\pi ix_1} \cdots e^{2\pi ix_1|x_{l+1}|}-e^{-2\pi ix_1} \cdots e^{-2\pi ix_1|x_{l+1}|}}{4\pi ix_1} dx_2 \cdots dx_{l+1} dx_1. 
\end{multline}
    
To evaluate this integral, we find
\begin{align}\label{integralone}
    \int_{-\infty}^{\infty} \widehat{\phi}(y)e^{2\pi ix_1|y|} \ &= \ 2\int_{0}^{v_n} \widehat{\phi}(y)e^{2\pi ix_1y} dy \nonumber\\ \ &= \ \frac{2i\pi vx-e^{2i\pi vx}+1}{2\pi^2 v^2 x^2} = I_1(x, v). 
 \end{align}

Additionally, we find
\begin{align}\label{integral2}
    \int_{-\infty}^{\infty} \widehat{\phi}(y)e^{-2\pi x_1|y|} \ &= \ 2\int_{0}^{v_n} \widehat{\phi}(y)e^{-2\pi ix_1|y|} dy \nonumber\\ \ &= \ \frac{1-e^{-2i\pi vx - 2i\pi vx}}{2\pi^2v^2x^2} = \ I_2(x, v).
\end{align}
Plugging these values into \eqref{fubini}, we get
\begin{align}\label{finalTvalue}
    T \ &= \ \int_{-\infty}^{\infty} I_1(x_1)^{l}\phi^{m-l}(x_1)\frac{e^{2\pi ix_1}}{4\pi ix_1} - \int_{-\infty}^{\infty} I_2(x_1)^{l}\phi^{m-l}(x_1)\frac{e^{-2\pi ix_1}}{4\pi ix_1} \nonumber\\ \ &= \ \int_{-\infty}^{\infty} \phi^{m-l}(x)\frac{I_1(x)^le^{2\pi ix}-I_2(x)^le^{-2\pi ix}}{4\pi ix}.
\end{align}
Therefore, plugging in our value of $T$ from \eqref{finalTvalue}, we get
\begin{multline}
    R(m, i; \phi) \ = \ 2^{m-1}(-1)^{m+1}\sum_{l=0}^{i-1}(-1)^l\binom{m}{l} \\ \left(-\frac{1}{2}\phi^m(0) + \int_{-\infty}^{\infty} \phi^{m-l}(x)\frac{I_1(x)^le^{2\pi ix}-I_2(x)^le^{-2\pi ix}}{4\pi ix}\right).
\end{multline}
\end{proof}

\begin{rem}
From the equations of $I_1(x, v)$ and $I_2(x, v)$, we see that $I_1(x, v)$ and $I_2(x, v)$ are complex conjugates. Therefore, when calculating the difference of $I_1(x, v)$ and $I_2(x, v)$, we only need to calculate $2\Re(I_1(x, v))$ which reduces computation.
\end{rem}

\normalfont

We now want to calculate the values of $S$ for small cases to find bounds using support $2/n$. We first consider the cases of $n=2$ and $n=4$ with the naive test function. 

\section{Plots of Bounds for support $v=2/n$}
We display bounds found using support $v=2/n$. We first present bounds for the naive test function.

\begin{figure}[h]
\begin{center}
\scalebox{1}{\includegraphics{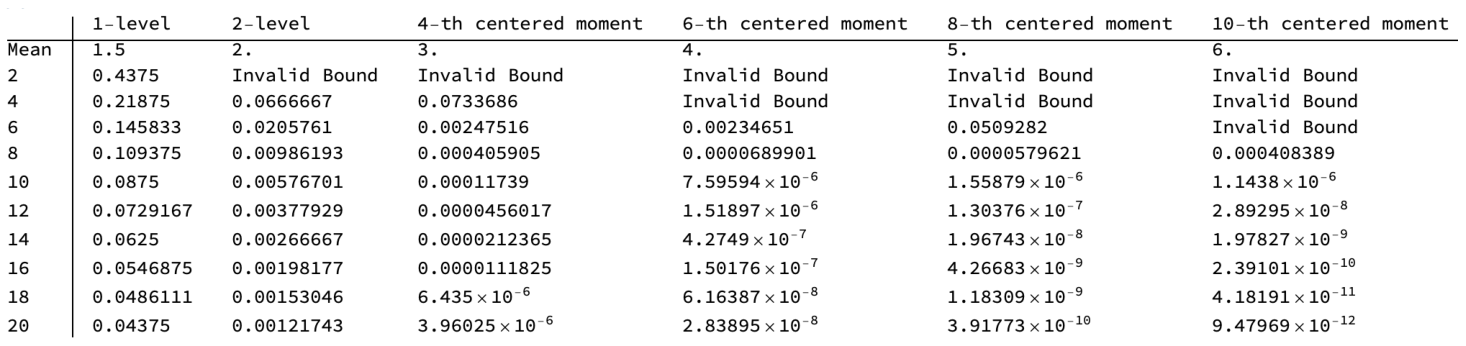}}\caption{\label{fig:sheet5} Approximate Bounds for the Percent of Vanishing to exact order $r$ for the case G=SO(even) with support $v=2$ for the $1$-level and $v=2/n$ for the $n$-level with $n$ going from $1$ to $10$ and $r$ from $2$ through $20$ obtained using the naive test function.}
\end{center}\end{figure}
\begin{figure}[h]
\begin{center}
\scalebox{1}{\includegraphics{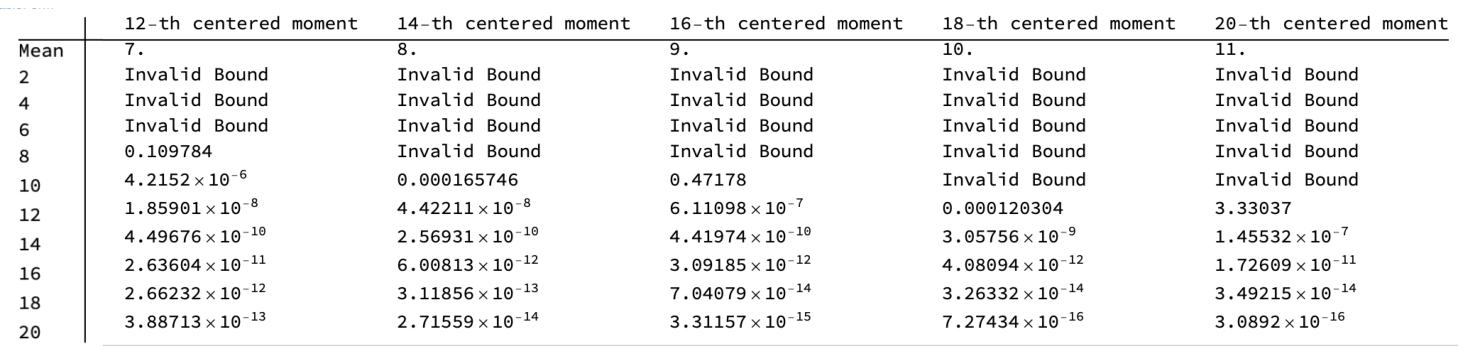}}\caption{\label{fig:sheet6} Approximate Bounds for the Percent of Vanishing to exact order $r$ for the case G=SO(even) with support $v=2$ for the $1$-level and $v=2/n$ with $n$ going from 12 through 20 and $r$ going from $2$ through $20$ obtained using the naive test function.}
\end{center}\end{figure}
\begin{figure}[h]
\begin{center}
\scalebox{1}{\includegraphics{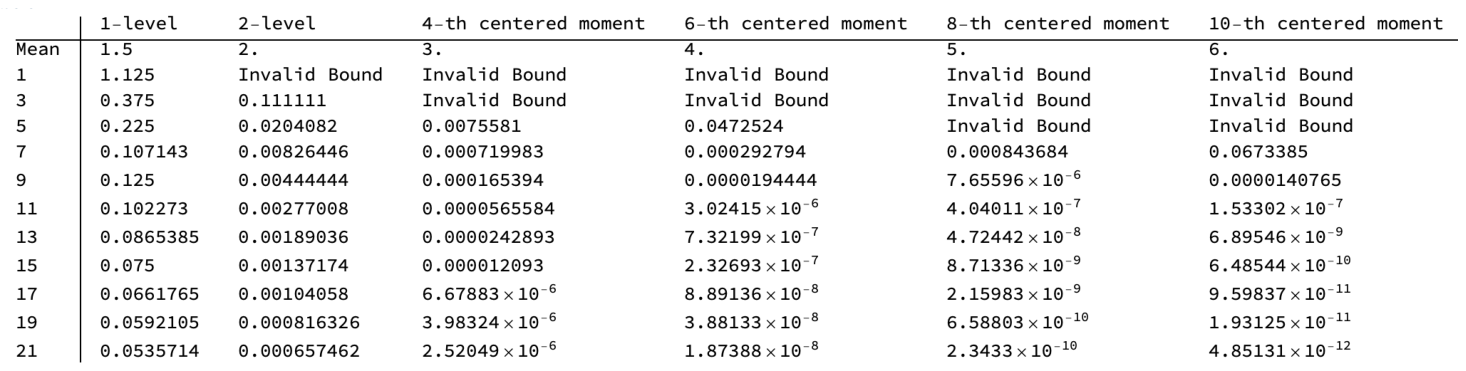}}\caption{\label{fig:sheet7} Approximate Bounds for the Percent of Vanishing to exact order $r$ for the case G=SO(odd) with support $v=2$ for the $1$-level and $v=2/n$ with $n$ going from $1$ through $10$ and $r$ going from $1$ through $21$ obtained using the naive test function.}
\end{center}\end{figure}
\begin{figure}[h]
\begin{center}
\scalebox{1}{\includegraphics{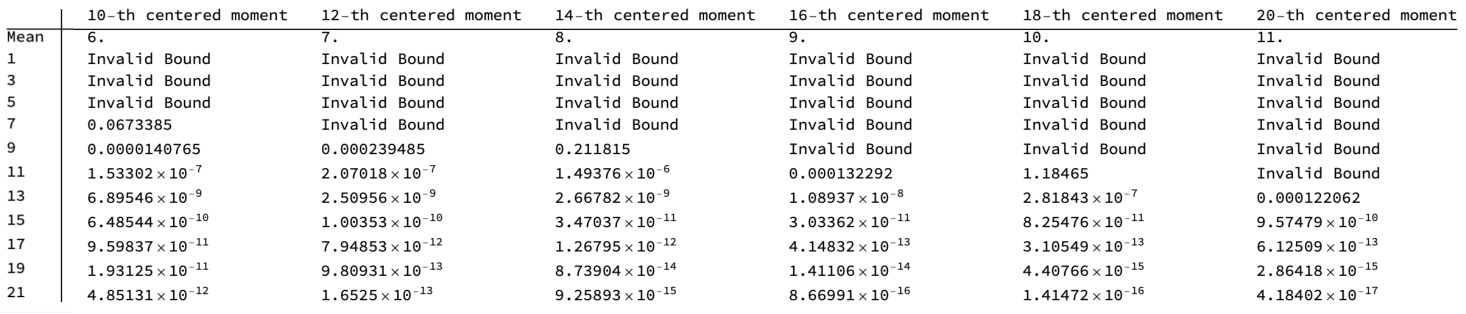}}\caption{\label{fig:sheet8} Approximate Bounds for the Percent of Vanishing to exact order $r$ for the case G=SO(odd) with support $v=2$ for the $1$-level and $v=2/n$ with $n$ going from $12$ through $20$ and $r$ going from $1$ through $21$ obtained using the naive test function.}
\end{center}\end{figure}
We now show our bounds using the optimal test function for the 1-level density. 
\begin{figure}[h]
\begin{center}
\scalebox{1}{\includegraphics{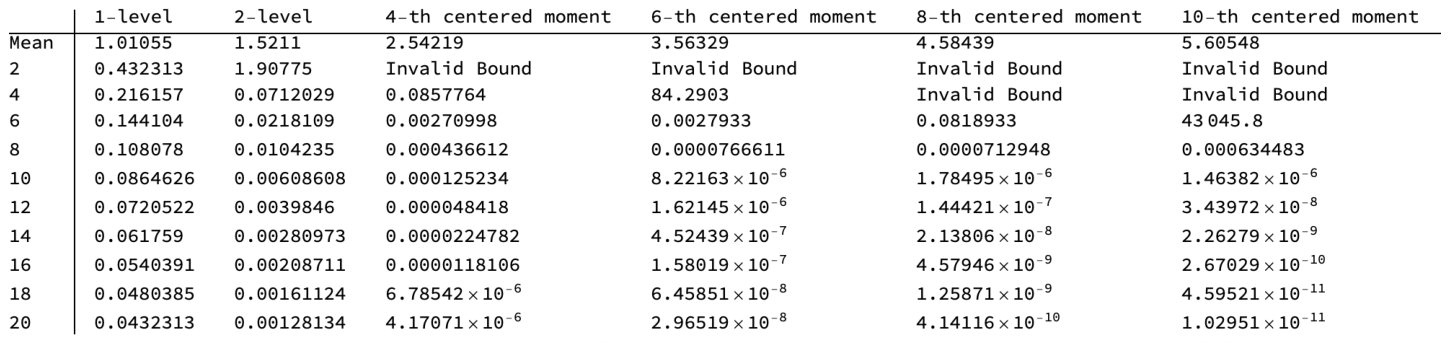}}\caption{\label{fig:sheet9} Approximate Bounds for the Percent of Vanishing to exact order $r$ for the case G=SO(even) with support $v=2$ for the $1$-level and $v=2/n$ for the n-level with $n$ going from $1$ to $10$ and $r$ from $2$ through $20$ obtained using the optimal test function.}
\end{center}\end{figure}
\begin{figure}[h]
\begin{center}
\scalebox{1}{\includegraphics{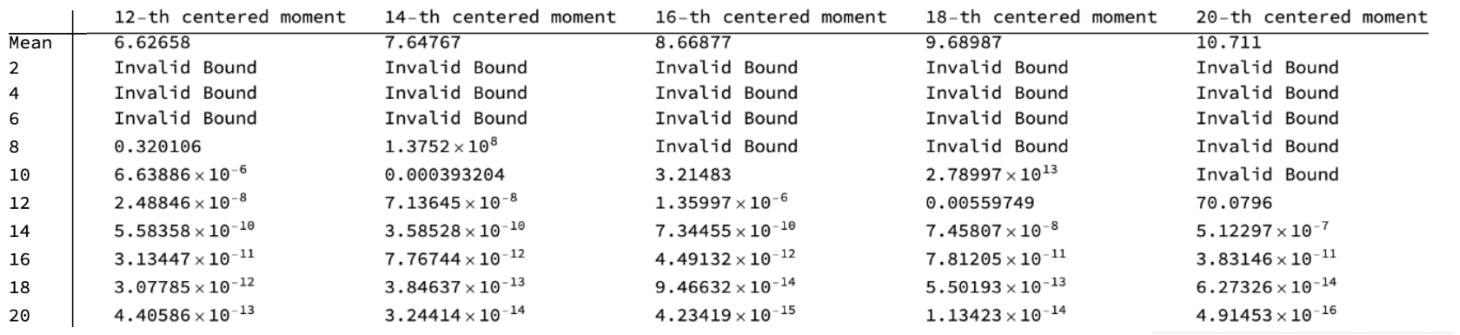}}\caption{\label{fig:sheet10} Approximate Bounds for the Percent of Vanishing to exact order $r$ for the case G=SO(even) with support $v=2$ for the $1$-level and $v=2/n$ with $n$ going from 12 through 20 and $r$ going from $2$ through $20$ obtained using the optimal test function.}
\end{center}\end{figure}
\begin{figure}[h]
\begin{center}
\scalebox{1}{\includegraphics{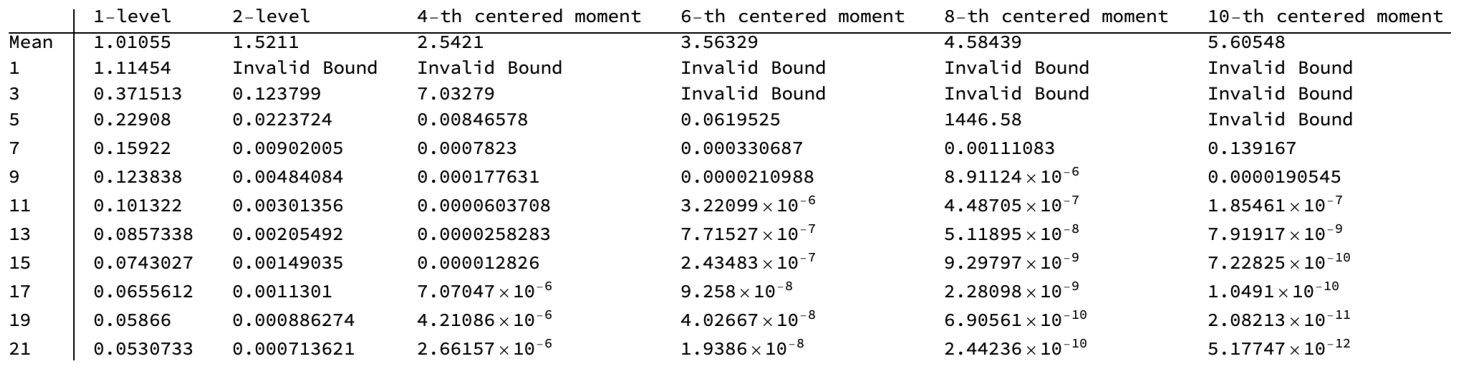}}\caption{\label{fig:sheet11} Approximate Bounds for the Percent of Vanishing to exact order $r$ for the case G=SO(odd) with support $v=2$ for the $1$-level and $v=2/n$ with $n$ going from $1$ through $10$ and $r$ going from $1$ through $21$ obtained using the optimal test function.}
\end{center}\end{figure}
\begin{figure}[ht]
\begin{center}
\scalebox{1}{\includegraphics{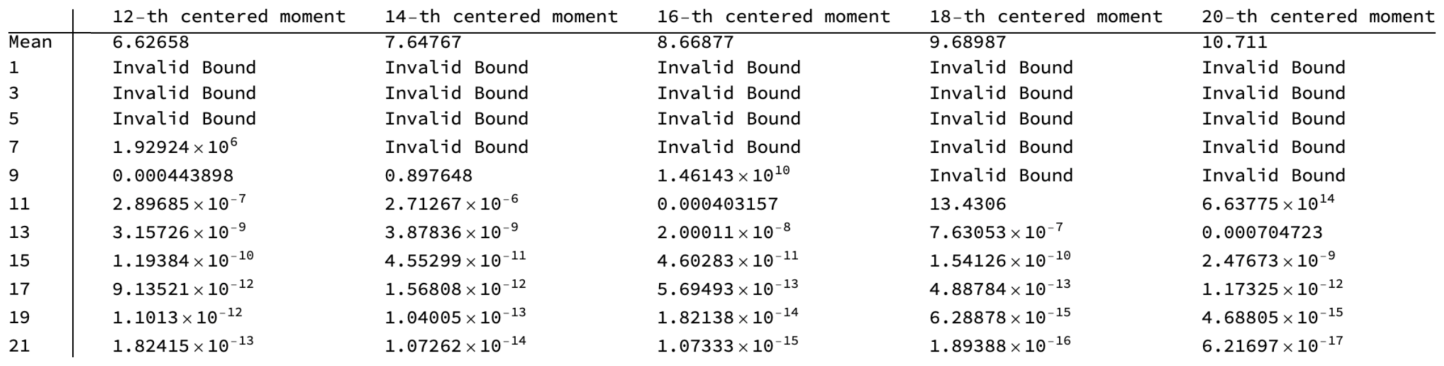}}\caption{\label{fig:sheet12} Approximate Bounds for the Percent of Vanishing to exact order $r$ for the case G=SO(odd) with support $v=2$ for the $1$-level and $v=2/n$ with $n$ going from $12$ through $20$ and $r$ going from $1$ through $21$ obtained using the optimal test function.}
\end{center}\end{figure}
As shown in the Figure \eqref{fig:sheet5}, the 1-level optimal test function does not produce better results than the naive test function for higher levels. While the bounds for the 1-level optimal are better for the 1-level, interestingly the naive test function outperforms it for higher levels and ranks.

See the table in \S\ref{sec:mainresults} for a summary of the bounds attainable using the best choice of level and the naive test function for each rank from $2$ through $20$.

%%%%%%%%%%%%%%%%%%%%%%%%%%%%%%%%%%%%%%%%%%%%%%%%%%%%%%%%%%%%%%%%%%%%%%%%%%%%%%%%%%%%%%%%%%%%%%%%%%%%%%%%%%%%%%%%%%%%%%%%%%%%%%%%%%%%%%%%%%%%%%%%
%%%%%%%%%%%%%%%%%%%%%%%%%%%%%%%%%%%%%%%%%%%%%%%%%%%%%%%%%%%%%%%%%%%%%%%%%%%%%%%%%%%%%%%%%%%%%%%%%%%%%%%%%%%%%%%%%%%%%%%%%%%%%%%%%%%%%%%%%%%%%%%%
%%%%%%%%%%%%%%%%%%%%%%%%%%%%%%%%%%%%%%%%%%%%%%%%%%%%%%%%%%%%%%%%%%%%%%%%%%%%%%%%%%%%%%%%%%%%%%%%%%%%%%%%%%%%%%%%%%%%%%%%%%%%%%%%%%%%%%%%%%%%%%%%

\appendix
%%%%%%%%%%%%%%%%%%%%%%%%%%%%%%%%%%%%%%%%%%%%%%%%%%%%%%%%%%%%%%%%%%%%%%%%%%%%%%%%%%%%%%%%%%%%%%%%%%%%%%%%%%%%%%%%%%%%%%%%%%%%%%%%%%%%%%%%%%%%%%%%
%%%%%%%%%%%%%%%%%%%%%%%%%%%%%%%%%%%%%%%%%%%%%%%%%%%%%%%%%%%%%%%%%%%%%%%%%%%%%%%%%%%%%%%%%%%%%%%%%%%%%%%%%%%%%%%%%%%%%%%%%%%%%%%%%%%%%%%%%%%%%%%%
%%%%%%%%%%%%%%%%%%%%%%%%%%%%%%%%%%%%%%%%%%%%%%%%%%%%%%%%%%%%%%%%%%%%%%%%%%%%%%%%%%%%%%%%%%%%%%%%%%%%%%%%%%%%%%%%%%%%%%%%%%%%%%%%%%%%%%%%%%%%%%%%

%%%%%%%%%%%%%%%%%%%%%%%%%%%%%%%%%%%%%%%%%%%%%%%%%%%%%%%%%%%%%%%%%%%%%%%%%%%%%%%%%%%%%%%%%%%%%%%%%%%%%%%%%%%%%%%%%%%%%%%%%%%%%%%%%%%%%%%%%%%%%%%%
%%%%%%%%%%%%%%%%%%%%%%%%%%%%%%%%%%%%%%%%%%%%%%%%%%%%%%%%%%%%%%%%%%%%%%%%%%%%%%%%%%%%%%%%%%%%%%%%%%%%%%%%%%%%%%%%%%%%%%%%%%%%%%%%%%%%%%%%%%%%%%%%
%%%%%%%%%%%%%%%%%%%%%%%%%%%%%%%%%%%%%%%%%%%%%%%%%%%%%%%%%%%%%%%%%%%%%%%%%%%%%%%%%%%%%%%%%%%%%%%%%%%%%%%%%%%%%%%%%%%%%%%%%%%%%%%%%%%%%%%%%%%%%%%%

%%%%%%%%%%%%%%%%%%%%%%%%%%%%%%%%%%%%%%%%%%%%%%%%%%%%%%%%%%%%%%%%%%%%%%%%%%%%%%%%%%%%%%%%%%%%%%%%%%%%%%%%%%%%%%%%%%%%%%%%%%%%%%%%%%%%%%%%%%%%%%%%
%%%%%%%%%%%%%%%%%%%%%%%%%%%%%%%%%%%%%%%%%%%%%%%%%%%%%%%%%%%%%%%%%%%%%%%%%%%%%%%%%%%%%%%%%%%%%%%%%%%%%%%%%%%%%%%%%%%%%%%%%%%%%%%%%%%%%%%%%%%%%%%%
%%%%%%%%%%%%%%%%%%%%%%%%%%%%%%%%%%%%%%%%%%%%%%%%%%%%%%%%%%%%%%%%%%%%%%%%%%%%%%%%%%%%%%%%%%%%%%%%%%%%%%%%%%%%%%%%%%%%%%%%%%%%%%%%%%%%%%%%%%%%%%%%

\clearpage

\ \\

\thispagestyle{empty}

\end{document}